\title{Finding unavoidable colorful patterns in multicolored graphs}
\author{Matthew Bowen\thanks{McGill University, Montréal, Quebec, Canada. \texttt{matthew.bowen2@mail.mcgill.ca} } \and Ander Lamaison\thanks{Freie Universität and Berlin Mathematical School, Berlin, Germany. \texttt{lamaison@zedat.fu-berlin.de} } \and Alp Müyesser\thanks{Freie Universität and Berlin Mathematical School, Berlin, Germany. \texttt{alp.muyesser@fu-berlin.de} }}
\date{\vspace{-5ex}}
\newcommand{\ep}{\varepsilon}
\newcommand{\expected}{\mathbb{E}}
\theoremstyle{plain}
\newtheorem{theorem}{Theorem}[section]
\newtheorem{lemma}[theorem]{Lemma}
\newtheorem{proposition}[theorem]{Proposition}
\newtheorem{corollary}[theorem]{Corollary}
\newtheorem{definition}[theorem]{Definition}
\newtheorem{question}{Question}
\begin{document}
\maketitle
\begin{abstract}
\par We provide multicolored and infinite generalizations for a Ramsey-type problem raised by Bollob\'as, concerning colorings of $K_n$ where each color is well-represented. Let $\chi$ be a coloring of the edges of a complete graph on $n$ vertices into $r$ colors. We call $\chi$ $\ep$-balanced if all color classes have $\ep$ fraction of the edges. Fix some graph $H$, together with an $r$-coloring of its edges. Consider the smallest natural number $R_\ep^r(H)$ such that for all $n\geq R_\ep^r(H)$, all $\ep$-balanced colorings $\chi$ of $K_n$ contain a subgraph isomorphic to $H$ in its coloring. Bollob\'as conjectured a simple characterization of $H$ for which $R_\ep^2(H)$ is finite, which was later proved by Cutler and Mont\'agh. Here, we obtain a characterization for arbitrary values of $r$, as well as asymptotically tight bounds. We also discuss generalizations to graphs defined on perfect Polish spaces, where the corresponding notion of balancedness is each color class being non-meagre.
\end{abstract}
\section{Introduction}
Graph Ramsey Theory refers to mathematical results that attempt to find large patterns in colored graphs. The pattern in question is most often a monochromatic clique, a complete subgraph whose edges are all the same color. The \textit{Ramsey number} $R(k)$ is the smallest integer $n$ for which every $2$-edge-coloring of the complete graph $K_n$ contains a monochromatic clique on $k$ vertices. It is known that $2^{k/2}\leq R(k)\leq 2^{2k}$, where the constants that appear in the exponents resisted improvements for decades. For an overview, we refer the reader to the 2015 survey by Conlon et al. \cite{Conlon et al.(2015)}.
\par To find patterns that are not monochromatic, we need to assume that all color classes are sufficiently represented in the colorings we consider.
\begin{definition}
    We call $\chi:E(K_n)\rightarrow [r]$ $\ep$-balanced if each color class has at least $\ep\binom{n}{2}$ edges. When it is clear from context, we call $K_n$ $\ep$-balanced if it comes equipped with an $\ep$-balanced coloring.
\end{definition}
\par A result by Erd\H{o}s and Szemerédi shows that graphs which are not $\ep$-balanced contain monochromatic cliques larger than the general bounds would be able to provide \cite{ErdosSzemeredi}. So in some sense, even if we were not interested in finding bi-colored patterns, $\ep$-balanced graphs are the natural graphs to study from a Ramsey theoretic standpoint.
\par The natural function to look at is the \textit{$\ep$-balanced Ramsey number}, which we introduce below. Note that by a color-consistent copy, we mean a subgraph which also preserves the color structure up to relabelling of the colors.
\begin{definition}\label{def:ramsey}
  Let $r\in \mathbb{N}$, fix some $\ep$ with $0<\ep<1/r$, let $H$ be some graph with an associated $r$-edge-coloring. We denote by $R^r_{\ep}(H)$ the smallest $N\in\mathbb{N}$ such that for any $n\geq N$, if $\chi_{\ep}$ is some $\ep$-balanced $r$-edge-coloring of $K_n$, then $K_n$ contains a color-consistent copy of $H$. If no such $N\in\mathbb{N}$ exists, we say $R_{\ep}^r(H)=\infty$.
\end{definition}
When $r$ is not specified, $R_\ep(H)$ denotes $R_\ep^2(H)$. Also, if $\mathcal{H}$ is a family of colored graphs, $R_\ep^r(\mathcal{H})$ denotes the smallest positive integer $N$ for which for all $n\geq N$, all $\ep$-balanced $K_n$ contain a color-consistent copy of some element in $\mathcal{H}$.
\begin{figure}
    \centering
    \includegraphics{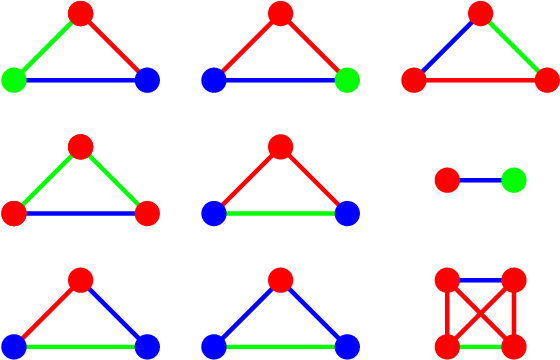}
    \caption{Elements of $\mathcal{F}_k^r$, i.e. the $(r,k)$-unavoidable graphs. Vertices represent complete graphs with $k$ vertices, and edges represent complete bipartite graphs. Theorem \ref{thm:finitetheorem} states that any large enough $\ep$-balanced $3$-colored $K_n$ will necessarily contain one of these graphs, up to permutation of the colors.}
    \label{fig:3colors}
\end{figure}
\par We call a two-coloring of a $K_{2k}$ $k$-unavoidable if one color forms a clique of size $k$, or two disjoint cliques of size $k$. Since $k$-unavoidable graphs are $\ep$-balanced for $\ep$ arbitrarily close to $1/2$, the only $H$ for which $R_\ep^2(H)$ is finite are $H$ which appear in both types of $k$-unavoidable graphs (where $k\geq v(H)$). We will denote the family of $k$-unavoidable graphs as $\mathcal{F}_k$.
\par In the other direction, Bollob\'as conjectured that $R_\ep^2(\mathcal{F}_k)$ is finite, and this was proved by Cutler and Mont\'agh, who showed $R_\ep^2(\mathcal{F}_k)\leq 4^{k/\ep}$ \cite{cutler}. Fox and Sudakov later improved the bound, showing $R_\ep^2(\mathcal{F}_k)\leq (16/\ep)^{2k}$, which is asymptotically tight \cite{stuff}. In particular, this result completely characterizes the two-colored $H$ for which $R_\ep^2(H)$ is finite.
\par A natural question is to similarly classify the $r$-edge-colored $H$ for which $R_\ep^r(H)$ is finite. Our first result is in this direction. To achieve this, we first need to define the right notion of an $(r,k)$-unavoidable graph.
\begin{definition}
  Let $H : = K_{tk}$ be a complete graph on $tk$ vertices whose edges are $r$-colored, for some positive integer $t$. We call $H$ \emph{$(r,k)$-unavoidable} if there is a partition $V(H) = \bigsqcup_{i\in[t]}V_i$ of the vertices of $H$ into $t$ parts, with $|V_i|=k$, such that:
  \begin{enumerate}
     \item For all $i,j\in[t]$, $H[V_i]$ and $H[V_i\times V_j]$ are monochromatic.
      \item All $r$-colors are present in $H$.
      \item Not all $r$ colors are present in $H\setminus V_i$, for any $i\in[t]$.
  \end{enumerate}
 We denote the family of $(r,k)$-unavoidable graphs as $\mathcal{F}_k^r$.
\end{definition}
\par We remark that this definition properly extends the definition of $k$-unavoidable graphs, that is, $\mathcal{F}_k^2=\mathcal{F}_k$. Also, for every $r$, $\mathcal{F}_k^r$ is a finite set. Indeed, any element of $\mathcal{F}_k^r$ can have at most $2r$ vertices, as for every part $V_i$ in the partition, we need to have a color which disappears upon the deletion of $V_i$. And if two parts $V_i$ and $V_j$ share the color that disappears upon their deletion, that color can only appear in the bipartite graph $V_i\times V_j$. Thus, there cannot be three parts which share the color that disappears upon their deletion, which proves the claim. (In fact, any element of $\mathcal{F}_k^r$ can have at most $2r-2$ elements, and this is tight.) See Figure \ref{fig:3colors} for a depiction of all $(3,k)$-unavoidable graphs up to permutation of the colors.
\par We also remark that $\mathcal{F}_k^r$ has size exponentially large in $r$, as one can embed the family of all tournaments on $r$ vertices into $\mathcal{F}_k^r$ by associating to each vertex a different color, and giving the bipartite graphs the color of the vertex they point to in the tournament. We finally remark that there exists an $\ep_r$ for every $r$ such that every element of $\mathcal{F}_k^r$ is at least $\ep_r$-balanced. This follows simply because elements of $\mathcal{F}_k^r$ have their number of parts bounded by a function of $r$.
\par Having established the elementary properties of the family of $(r,k)$-unavoidable graphs, we now state our main result, which is the generalization of Bollob\'as's conjecture \cite{cutler} to arbitrarily many colors.
\begin{theorem}\label{thm:finitetheorem}
For any $r\in\mathbb{N}$ and $\ep$ with $0<\ep<1/r$, there exists a constant $c:=c(r)$ such that for any $k\in\mathbb{N}$, we have that $R_\ep^r(\mathcal{F}^r_k)\leq \ep^{-ck}$.
\end{theorem}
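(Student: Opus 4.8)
The plan is to prove this by induction on $r$, using a Ramsey-type "regularization" or "reduction" argument that at each step either finds a color-consistent copy of some $F_k \in \mathcal{F}^r_k$ directly, or reduces to a $(r-1)$-colored instance. The base case $r=2$ is exactly the Cutler–Montágh / Fox–Sudakov result, giving $R_\ep^2(\mathcal{I}_k) \leq (16/\ep)^{2k}$, which has the required form $\ep^{-ck}$ (absorbing constants into $c$).

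**The key structural observation** is the definition of $\mathcal{F}^r$: every fully-colored complete graph on at most $2r-2$ vertices that uses all $r$ colors but becomes "color-deficient" after deleting any vertex must be color-consistent to some element of $\mathcal{F}^r$. So it suffices to show that a large $\ep$-balanced $K_n$ contains a $t$-blow-up of \emph{some} fully-colored complete graph $F$ on at most $2r-2$ vertices that uses all $r$ colors — because any such blow-up contains the blow-up of a vertex-minimal subconfiguration, hence some $F_k \in \mathcal{F}^r_k$ by deleting vertices (passing to sub-cliques within the blow-up preserves the $k$-blow-up structure). The heart of the argument is therefore: \emph{every sufficiently large $\ep$-balanced $r$-edge-colored $K_n$ contains a blow-up of an all-$r$-colors-using fully-colored complete graph on a bounded number of parts}, with the number of parts controlled by $r$ and the blow-up size $t$ eventually set to $k$.

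**The inductive step** would run as follows. Fix a large $\ep$-balanced $K_n$. Pick an edge $e$ of some fixed color, say color $r$, incident to vertices $u,v$. Consider a large vertex subset and look at how its vertices connect to the pair $\{u,v\}$; by pigeonhole a constant fraction (depending on $r$) lies in one "link type." This lets us either locate many vertices forming a monochromatic color-$r$ neighborhood structure (building toward a blow-up part/edge of color $r$), or we restrict to a clique on which color $r$ is essentially absent or confined to a bounded "core," leaving an $\ep'$-balanced coloring in at most $r-1$ colors (for a slightly worse $\ep'$, still a fixed function of $\ep$ and $r$) on a still-large clique, to which induction applies. One must be careful that $\mathcal{F}^{r-1}$-patterns combine with the color-$r$ structure we peeled off to yield an $\mathcal{F}^r$-pattern: this is exactly where the vertex-minimality condition (2) and the bound $2r-2$ on the number of parts are used — adding one new "color-$r$" vertex or merging into an existing part increases the part count by at most a controlled amount, and iterating $r$ times stays within $2r-2$.

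**The main obstacle I expect** is bookkeeping the interaction between the peeled-off color and the recursively-obtained pattern so that the final object is genuinely a blow-up of a complete fully-colored graph (all pairs of parts must receive a \emph{single} color, and each part must be internally monochromatic) rather than a quasi-blow-up with some sparse exceptional edges; handling this cleanly may require an intermediate "colorful bipartite/clique regularity" lemma that, given many vertices with the same link-pattern to a bounded core, extracts a large sub-clique that is perfectly blown-up, at a multiplicative cost of the form $\ep^{-O(k)}$ per color. Tracking that each of the $\leq r$ rounds costs a factor $\ep^{-O(k)}$ and that the base case costs $(16/\ep)^{2k}$, the total is $\ep^{-O(r k)} = \ep^{-ck}$ with $c = c(r)$, as claimed. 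A secondary subtlety is ensuring the $\ep'$ produced in the induction never degrades below the admissible range $0 < \ep' < 1/(r-1)$, which follows since we only lose a constant factor depending on $r$ at each step and $r$ is fixed.
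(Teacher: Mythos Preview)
Your structural reduction is correct and coincides with the paper's: it suffices to find, in any large $\ep$-balanced $K_n$, a ``fully-complete multipartite'' configuration (disjoint monochromatic cliques of size $k$, each pair joined by a monochromatic complete bipartite graph) that uses all $r$ colors, since passing to a vertex-minimal subconfiguration then yields an element of $\mathcal{F}^r_k$. But the paper does \emph{not} induct on $r$. It builds the whole structure in one shot via dependent random choice: apply DRC once in each color class to get sets $W_1,\dots,W_r$ whose small subsets have large common neighborhoods in the respective color; iterate a bipartite DRC (Corollary~\ref{cor:multipartitedrc}/\ref{cor:finaldrc}) to cut these down to homogeneous sets with all pairwise bipartite graphs monochromatic; then use the stored common-neighborhood property and pigeonhole to attach a second layer of sets so that color $i$ is explicitly witnessed between the $i$-th pair. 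Every step costs a fixed power of $n$, giving $n\geq\ep^{-c(r)k}$.

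Your inductive step, by contrast, has a real gap at the dichotomy. Looking at link-types to a single color-$r$ edge $\{u,v\}$ does not produce the alternative you claim: failing to find many vertices with a color-$r$ link to $\{u,v\}$ gives no reason for color $r$ to be absent or ``confined to a bounded core'' on any large subclique---color $r$ has global density $\ge\ep$ and nothing you wrote localizes or kills it. Even in the favorable branch you only get a set $S$ uniformly joined to one or two \emph{vertices}; inside $S$ all $r$ colors may still be $\ep$-balanced, so the number of colors has not dropped, and a single vertex $u$ is not a blow-up part of size $k$. The ``combination'' you flag as the main obstacle is in fact the whole problem: to graft a color-$r$ piece onto an inductively-obtained $(r-1)$-pattern you need that pattern to sit inside a set monochromatically joined to a large color-$r$ clique \emph{and} with all other cross-links monochromatic---arranging all of this simultaneously is exactly what the paper's iterated DRC does and what a one-edge link argument cannot. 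Relatedly, your claim that the restricted instance remains $\ep'$-balanced in $r-1$ colors ``since we only lose a constant factor'' is unjustified: restricting to a subgraph gives no control on the relative densities of the surviving colors. The paper explicitly notes that a naive iterative scheme (K\H{o}v\'ari--S\'os--Tur\'an plus Ramsey, applied color by color) yields only a tower of height $\Theta(r)$; your sketch supplies no mechanism that would beat that.
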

The result is asymptotically tight by a simple probabilistic construction. Further, the result characterizes $r$-edge-colored $H$ with a finite value of $R_\ep^k(H)$ as those which are color-consistent subgraphs of \textit{all} elements of $\mathcal{F}^r_k$ whenever $\ep$ is small enough with respect to $r$ and $k$ is large enough. This is because for every $r$ there exists an $\ep_r$ such that every element of $\mathcal{F}_k^r$ is at least $\ep_r$-balanced.
\par Even though Theorem \ref{thm:finitetheorem} is best possible asymptotically in general, we manage to get better  upper bounds for $R^2_\ep(H)$ for certain ``asymmetric'' $H$. The pattern defined below is a weakening of $\mathcal{F}^2_k$.
\begin{figure}[h]
  \centering
   \hspace*{\fill}
   \begin{tikzpicture}
       \tikzstyle{point}=[circle,thick,draw=black,fill=black,inner sep=0pt,minimum width=4pt,minimum height=4pt]

       \begin{scope}[rotate=17]
       \foreach \number in {1,...,5}{
           \node[point] (N-\number) at ({\number*(360/5)}:1.5cm) {};
       }

       \foreach \number in {1,4,5}{
           \foreach \y in {1,4,5}{
               \draw[blue] (N-\number) -- (N-\y);
           }
       }
       \foreach \number in {2,3}{
           \foreach \y in {1,4,5}{
               \draw[red] (N-\number) -- (N-\y);
           }
       }
       \end{scope}
   \end{tikzpicture}\hfill
    \begin{tikzpicture}
        \tikzstyle{point}=[circle,thick,draw=black,fill=black,inner sep=0pt,minimum width=4pt,minimum height=4pt]

        \begin{scope}[rotate=17]
        \foreach \number in {1,...,6}{
            \node[point] (N-\number) at ({\number*(360/6)}:1.5cm) {};
        }

        \foreach \number in {1,4,5,6}{
            \foreach \y in {1,4,5,6}{
                \draw[red] (N-\number) -- (N-\y);
            }
        }
        \foreach \number in {2,3}{
            \foreach \y in {1,4,5,6}{
                \draw[blue] (N-\number) -- (N-\y);
            }
        }
        \end{scope}
\end{tikzpicture}\hfill
\begin{tikzpicture}
    \tikzstyle{point}=[circle,thick,draw=black,fill=black,inner sep=0pt,minimum width=4pt,minimum height=4pt]

    \begin{scope}[rotate=17]
    \foreach \number in {1,...,7}{
        \node[point] (N-\number) at ({\number*(360/7)}:1.5cm) {};
    }

    \foreach \number in {1,3,4,5,6,7}{
        \foreach \y in {1,3,4,5,6,7}{
            \draw[red] (N-\number) -- (N-\y);
        }
    }
    \foreach \number in {2}{
        \foreach \y in {1,3,4,5,6,7}{
            \draw[blue] (N-\number) -- (N-\y);
        }
    }
    \end{scope}
\end{tikzpicture}
\hspace*{\fill}
    \caption{From left to right, an $M_{2,3}$, an $M_{2,4}$, an $M_{1,6}$}
    \label{fig:M}
\end{figure}
\begin{definition}
  By $M_{l,k}$, we denote a family of $2$-edge-colored graphs with vertex set $V:=L\sqcup R$ where $|L|=l$ and $|R|=k$, and $R$ is a (without loss of generality) red clique, $L\times R$ is a blue complete bipartite graph, and $L$ is an independent set.
\end{definition}
See Figure \ref{fig:M} for clarity. Observe that $M_{0,k}$ is simply a monochromatic set, and $M_{1,k}$ is a particular coloring of a complete graph. The below result shows that when $l$ is treated as a constant, $R_\ep^2(M_{l,k})$ is at most a constant factor away from $R(k)$.
\begin{theorem}\label{thm:asymmetric}
  For any $0<\ep<0.5$, and $l\in\mathbb{N}$, there exists a $C:=C(\ep, l)$ such that $R_{\ep}(M_{l,k})\leq C\cdot R(k)$.
\end{theorem}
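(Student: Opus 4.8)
The plan is to reduce to the following: find disjoint vertex sets $P,Z$ with $|P|=l$ and $|Z|\ge R(k)$ such that every edge between $P$ and $Z$ has one fixed colour $b$, and the colouring restricted to $Z$ has no monochromatic $K_k$ of colour $b$. This suffices, because $|Z|\ge R(k)$ forces a monochromatic $K_k$ inside $Z$, which must then have the other colour $a$; that $K_k$ together with $P$ is a colour-consistent copy of an element of $M_{l,k}$ (the colours inside $P$ are irrelevant, since $M_{l,k}$ is a family). The target is monotone: any $Z'\subseteq Z$ with $|Z'|\ge R(k)$ also works, which leaves a lot of room.

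\textbf{Codegree averaging and the first step.} The main tool is a convexity estimate: if a colouring of $K_m$ uses some colour $d$ on at least $\gamma\binom m2$ edges, then (since $x\mapsto\binom xl$ is convex) some $l$-set $T$ has $|N_d(T)|\ge(\gamma/2)^l m$, provided $m$ is large in terms of $l$ and $\gamma$. First I would apply this to a majority colour $b$ of $\chi$, obtaining an $l$-set $P$ and $A:=N_b(P)$ with $|A|\ge 4^{-l}n$ and all $P$–$A$ edges of colour $b$; write $a$ for the other colour. If $A$ contains an $a$-coloured $K_k$ we are done, since that clique and $P$ form the desired copy. Otherwise $A$ has no $a$-coloured $K_k$, so by Ramsey every $R(k)$-subset of $A$ contains a $b$-coloured $K_k$; hence it is enough to find $Z\subseteq A$ with $|Z|\ge R(k)$ having at least $l$ common $a$-neighbours (then the $b$-coloured $K_k$ inside $Z$, together with those neighbours, is a copy of $M_{l,k}$). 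If $A$ itself carries at least $\gamma_0\binom{|A|}2$ edges of colour $a$, for a small constant $\gamma_0=\gamma_0(\ep,l)$, then codegree averaging inside $A$ (on colour $a$) produces such a $Z=N_a(P')\cap A$ of size linear in $n$, which is $\ge R(k)$ once $n\ge C(\ep,l)R(k)$, and we are done.

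\textbf{The hard case and the main obstacle.} It remains to handle the case where $A$ has fewer than $\gamma_0\binom{|A|}2$ edges of colour $a$, i.e.\ $A$ is nearly monochromatic in $b$. Since $\chi$ is $\ep$-balanced there are at least $\ep\binom n2$ edges of colour $a$, of which fewer than $\gamma_0\binom n2$ lie inside $A$; taking $\gamma_0<\ep/2$, at least $(\ep/2)\binom n2$ edges of colour $a$ meet $V\setminus A$. If at least $(\ep/4)\binom n2$ of them run between $A$ and $V\setminus A$, then codegree averaging from the $(V\setminus A)$-side yields an $l$-set $P'\subseteq V\setminus A$ with $|N_a(P')\cap A|$ linear in $n$, and (being a subset of $A$) this is a legitimate $Z$. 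Otherwise at least $(\ep/4)\binom n2$ edges of colour $a$ lie inside $V\setminus A$, so $|V\setminus A|$ is linear in $n$, $V\setminus A$ has colour-$a$ density bounded below by a constant, and almost every $A$–$(V\setminus A)$ edge has colour $b$; if moreover the colour-$a$ graph on $V\setminus A$ is $K_k$-free, then $N_b(S)\cap(V\setminus A)$ is $a$-$K_k$-free for every $l$-set $S\subseteq A$, and averaging inside $A$ makes some such set linear in $n$, giving $Z$ with pendant colour $b$. The one genuinely resistant configuration is: $A$ almost all $b$, the complement $V\setminus A$ containing an $a$-coloured $K_k$, and $V\setminus A$ carrying a constant density of both colours. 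Here I would recurse on $V\setminus A$ — still an $\ep'$-balanced coloured complete graph on linearly many vertices with $\ep'$ a constant, possibly with $a,b$ interchanged — and handle further sub-cases (when $V\setminus A$ is again nearly monochromatic) by the same dichotomy. The crux, and the step I expect to be the main obstacle, is to organise this recursion so that it terminates after a number of rounds bounded in terms of $\ep$ and $l$ only: each round either finds the pattern or deletes a linear-sized nearly-monochromatic block and passes to a smaller remainder, and one must keep the balance parameter above a fixed positive constant, which is precisely what pins the final bound at $C(\ep,l)\cdot R(k)$ rather than something larger. Verifying termination, especially when a common-neighbourhood set and its complement are both nearly monochromatic but in opposite colours, is where the real work lies.
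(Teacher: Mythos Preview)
Your overall reduction is right, and your first two cases (an $a$-clique inside $A$, or $A$ has constant $a$-density) are fine. But the proposal is genuinely incomplete at exactly the point you flag: the recursion on $V\setminus A$. You have not shown that the balance parameter stays bounded away from zero, and in fact one of your intermediate claims is shaky --- in the branch where $|V\setminus A|>n/2$, the $b$-density of the $A$--$(V\setminus A)$ edges is only $\ge 1-\Theta(\ep\cdot 4^{l})$, which is useless once $l$ is large compared to $\log(1/\ep)$. More importantly, the ``resistant configuration'' you isolate (both $A$ and $V\setminus A$ nearly monochromatic in opposite colours) can in principle reappear at every scale, and you give no potential function to force termination in $O_{\ep,l}(1)$ rounds. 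So as written this is a plan with a real hole, not a proof.

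The paper sidesteps the whole recursion by a symmetric move that is worth internalising. Rather than first committing to one colour --- finding an $l$-set $P$ with a large $b$-common-neighbourhood $A$, and then being stuck when $A$ is nearly $b$-monochromatic --- the paper proves directly that any $\ep$-balanced $K_n$ contains a set $S$ of size $\ge c(\ep,l)\,n$ together with $l$-sets $A$ and $B$ such that $A$ is complete to $S$ in red \emph{and} $B$ is complete to $S$ in blue. Once $|S|\ge R(k)$, Ramsey hands you a monochromatic $K_k$ inside $S$ of \emph{some} colour, and whichever colour it is, the $l$-set on the opposite side finishes the $M_{l,k}$. No case analysis on the colour of the clique, no recursion.

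The two-sided lemma itself is proved in one shot. Let $r'_1,\dots,r'_l$ and $b'_1,\dots,b'_l$ be the $l$ vertices of largest red degree and largest blue degree. If $\sum_i\big(d_B(r'_i)+d_R(b'_i)\big)\le (1-t)n$, then the set of vertices joined in red to every $r'_i$ and in blue to every $b'_i$ has size $\ge tn-2l$, and we are done. Otherwise some $r'_i$ has $d_B(r'_i)\ge\alpha n$ (or symmetrically), and since $r'_i$ is among the top $l$ in red degree this forces $d_B(v)\ge\alpha n$ for all but $l-1$ vertices $v$; combined with $\ep$-balancedness, a constant fraction of vertices also have $d_R(v)\ge\alpha n$. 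Now count $(2l+1)$-tuples $(v,r_1,\dots,r_l,b_1,\dots,b_l)$ with each $vr_i$ red and each $vb_i$ blue: there are $\ge c'n^{2l+1}$ of them, so by pigeonhole some fixed $(r_1,\dots,r_l,b_1,\dots,b_l)$ admits $\ge c'n$ choices of $v$, which is the desired $S$. This is exactly your ``codegree averaging'', but applied once to both colours simultaneously rather than to one colour followed by a cascade of sub-cases.
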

This gives a new proof of Bollob\'as's conjecture \cite{cutler} as a $2$-colored $K_n$ containing a $M_{R(k),k}$ necessarily contains an element of $\mathcal{F}_k^2$. The bound we obtain here is worse in general, but has better dependence on $k$. If we use the technique of our Theorem \ref{thm:finitetheorem} or the Fox-Sudakov result, the bound would have to have a multiplicative dependence on $k$ of the form $\ep^{-ck}$ for some positive constant $c$. In contrast, $R(k)\leq 2^{2k}$, with no dependence on $\ep$.
\par We are also interested in explicit values of $\ep$-balanced Ramsey numbers for small patterns. Here, it is natural to fix $\ep$ at $1/2$. The simplest case where the explicit value of $R_{1/2}^2(M_{l,k})$ is non-trivial is when $l=1$ and $k=3$. The Paley graph on $9$ vertices does not contain an $M_{1,3}$, and from the other side one can use our methods to obtain:
\begin{proposition}\label{prop:finiteprop}
    $10\leq R_{0.5}(M_{1,3})\leq 25$
\end{proposition}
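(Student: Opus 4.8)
The plan is to treat the two bounds separately, the lower bound by an explicit nine‑vertex colouring and the upper bound by a contradiction argument that extracts two large disjoint monochromatic cliques of opposite colours. For $R_{0.5}(M_{1,3})\ge 10$ I would colour an edge of $K_9$ red exactly when it is an edge of the Paley graph $P_9$. This graph is $4$-regular and self-complementary and is isomorphic to the $3\times 3$ rook's graph $K_3\,\square\,K_3$, so each colour class receives exactly $18=\tfrac12\binom92$ edges and the colouring is $0.5$-balanced. To check the absence of a colour-consistent $M_{1,3}$ I would first note that such a copy is exactly a monochromatic triangle together with a vertex joined in the opposite colour to all three of its vertices, and then use that the triangles of $K_3\,\square\,K_3$ are precisely its three rows and three columns: any vertex off a given row shares a column with one of that row's vertices, hence is joined to it in red and so is not joined in blue to all three, and symmetrically for columns. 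Dually (or directly by self-complementarity), the triangles of $\overline{P_9}$ are the six grid transversals, and no vertex is $P_9$-adjacent to all three vertices of a transversal. Hence neither orientation of $M_{1,3}$ occurs.

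For the upper bound, suppose $n\ge 25$ and that $\chi$ is a $0.5$-balanced colouring of $K_n$ — so each colour class has exactly $\tfrac12\binom n2$ edges, and if $\binom n2$ is odd there is nothing to prove — containing no colour-consistent $M_{1,3}$. By the reformulation above, the hypothesis is equivalent to: for every vertex $v$, the red graph induced on the blue neighbourhood of $v$ is triangle-free, and the blue graph induced on the red neighbourhood of $v$ is triangle-free. Since the blue degrees sum to $\binom n2$, some vertex $v_0$ has blue degree at least $(n-1)/2\ge 12\ge R(3,4)$, and as its blue neighbourhood contains no red $K_3$ it contains a blue $K_4$; together with $v_0$ this is a blue $K_5$, which I call $P$.

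I would then produce a red $K_5$ disjoint from $P$ and use it to finish. Any vertex $z\notin P$ is blue to at least three vertices of $P$: otherwise $z$ is red to all three vertices of some triangle inside the blue clique $P$, contradicting triangle-freeness of the blue graph on the red neighbourhood of $z$. Hence $P$ sends at most $2(n-5)$ red edges to $V':=V\setminus P$ and none inside $P$, so $V'$ contains at least $\tfrac12\binom n2-2(n-5)$ red edges; since $n(n-1)\ge 26(n-5)$ for $n\ge 25$, this forces some $u\in V'$ to have at least $9$ red neighbours inside $V'$. The colouring induced on those neighbours has no blue $K_3$ and at least $R(4,3)=9$ vertices, so it contains a red $K_4$; adjoining $u$ gives a red $K_5$ lying in $V'$, hence disjoint from $P$. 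Finally, with $P$ the blue $K_5$ and $Q$ the red $K_5$, count the $25$ edges between them: each of the five vertices of $Q$ lies outside $P$ and is blue to at least three vertices of $P$, giving at least $15$ blue cross-edges, while each of the five vertices of $P$ lies outside $Q$ and is red to at least three vertices of $Q$, giving at least $15$ red cross-edges. But $15+15>25$, a contradiction, so $R_{0.5}(M_{1,3})\le 25$.

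I expect the main obstacle to be this middle step: one must show that the first clique $P$ does not consume too much of the colouring, so that after deleting $P$ enough red edges remain and, crucially, are concentrated on a single outside vertex, producing a red $K_5$ disjoint from $P$. This is exactly where the balance hypothesis is used, together with the small Ramsey values $R(3,4)=R(4,3)=9$ and the inequality $n(n-1)\ge 26(n-5)$; since the estimates only improve as $n$ grows, it suffices to verify them at $n=25$, and combining the two bounds yields $10\le R_{0.5}(M_{1,3})\le 25$.
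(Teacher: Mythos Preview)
Your proof is correct. The lower bound via the Paley graph $P_9\cong K_3\,\square\,K_3$ is exactly the construction the paper has in mind, and your verification that no colour-consistent $M_{1,3}$ occurs is complete.

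For the upper bound your argument is genuinely different from the paper's. The paper derives $R_{0.5}(M_{1,3})\le 25$ by optimizing Lemma~\ref{lem:finitelem} for small integers: that lemma (via a degree-spread case split and a cherry count) produces a set $S$ of size roughly $(\sqrt{1-\ep}-(1-\ep))n$ lying simultaneously in the red neighbourhood of one vertex and the blue neighbourhood of another, and then one applies $R(3)=6$ inside $S$ to extract a monochromatic triangle and hence an $M_{1,3}$. You instead exploit the structural consequence of being $M_{1,3}$-free directly: using $R(3,4)=9$ you build a blue $K_5$, show every outside vertex sends at most two red edges into it, use the balance to locate a disjoint red $K_5$, and finish with the clean double count $15+15>25$ on the cross-edges. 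Your route is more elementary and self-contained, and it yields the value $25$ transparently without any hidden optimization step; the paper's route, on the other hand, is an instance of a general lemma that feeds into the broader $M_{l,k}$ bounds and works for all $\ep$, at the cost of the exact constant requiring a sharper cherry count (the paper notes the asymptotic $c\approx 0.207$ can be pushed toward $0.24$, which is what is needed to hit $25$).
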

We believe it should be possible to close this large gap. Recall that $R(4)=18$, and the unique construction on $17$ vertices without a $4$-clique is a Paley graph.
\bigskip
\par We also consider some infinite analogues of these questions. For graphs defined on the naturals, i.e. $[\mathbb{N}]^2=\{A\subseteq\mathbb{N}: |A|=2\}$, an easy coloring shows that the analogue of Bollob\'as's conjecture does not hold even if we assume every vertex is back degree balanced (i.e. half of the edges from vertex $i$ to vertices in $[i-1]$ are blue and half red) and has infinite degree in both colors. However for graphs defined on $\mathbb{R}$, we see that a simple topological notion of largeness, namely non-meagreness, corresponds well with our notion of $\ep$-balancedness in the finite case.
\begin{theorem}\label{thm:infinitetheorem}
Let $f:[\mathbb{R}]^2\to [r]$ be a coloring such that each color class is non-meagre and has the Baire property (as a subset of $\mathbb{R}^2$). Then $f$ yields some color-consistent copy of $\mathcal{F}^r_\mathfrak{c}$.
\end{theorem}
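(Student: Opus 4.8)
The plan is to transport the strategy behind Theorem~\ref{thm:finitetheorem} to the topological setting, using Baire category in place of edge counting and a Mycielski-type perfect-set extraction in place of the final greedy embedding.

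\textbf{Reduction to a ``generic blow-up''.} Call a fully-coloured complete graph $F$ \emph{generically realised} if there are pairwise disjoint open intervals $(W_v)_{v\in V(F)}$ with the colour of each vertex $v$ comeagre in $[W_v]^2$ and the colour of each edge $uv$ comeagre in $W_u\times W_v$. I claim it suffices to produce one generically realised $F$ that uses all $r$ colours. Indeed, deleting vertices of $F$ as long as all $r$ colours survive, we reach a vertex-minimal such $F'$, which by properties (1)--(3) is a colour-consistent copy of some member of $\mathcal{F}^r$; and $F'$ is still generically realised, via the corresponding subfamily of intervals. Given $F'$, set $X=\bigsqcup_{v\in V(F')}W_v$, a perfect Polish space, and let $B\subseteq[X]^2$ be the set of pairs whose colour disagrees with the pattern prescribed by $F'$. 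Since $[X]^2$ is the union of the finitely many clopen pieces $[W_v]^2$ and $W_u\times W_v$, on each of which $B$ is meagre by construction, $B$ is meagre in $[X]^2$. A Cantor-scheme argument in the style of Mycielski's theorem then produces a perfect $P\subseteq X$ with $[P]^2\cap B=\emptyset$; hard-wiring the first levels of the scheme so that each of the clopen sets $W_v$ receives at least one branch guarantees $|P\cap W_v|=\mathfrak{c}$ for all $v$. The sets $P\cap W_v$ are then the parts of a colour-consistent copy of $F'_{\mathfrak{c}}$, hence of $\mathcal{F}^r_{\mathfrak{c}}$.

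\textbf{Building the generic blow-up.} For each colour $i$, since the colour class $C_i$ (viewed in $\mathbb{R}^2$) has the Baire property and is non-meagre, it is comeagre in some basic open box, which after shrinking we take to be $I_i\times J_i$ with $I_i$, $J_i$ disjoint intervals. Starting from these localisations, I would build a finite family of nested/disjoint intervals and a partial fully-coloured complete graph on its ``leaves'', paralleling the finite proof: repeatedly pick a leaf interval $W$ without an assigned vertex colour, use that the $C_i$ cover $[W]^2$ to find a colour comeagre in a sub-box, and either assign a vertex colour to a subinterval (when that box can be taken with overlapping sides) or split $W$ into two intervals joined by that colour (otherwise). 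Kuratowski--Ulam replaces the double-counting steps of the finite argument. The remaining difficulty here is that some intervals carry no colour that is comeagre on a sub-clique --- witness $f(\{x,y\})=\lfloor\log_2|x-y|\rfloor\bmod 2$, where no colour is comeagre on any $[W]^2$ --- so ``monochromatic parts'' cannot always be obtained from comeagreness. These are handled by first restricting to a perfect subtree on which $f$ is put into a $\Delta$-canonical form (a Baire-category canonical Ramsey theorem), making the within-part colours genuinely monochromatic, and then attaching the outer interval structure on top.

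\textbf{Main obstacle.} The heart of the matter is ensuring that a \emph{single} finite generically realised $F$ uses \emph{all} $r$ colours. This is genuinely restrictive: for instance with $f(\{x,y\})=1$ when $|x-y|>1$, $=2$ when $|x-y|\le 1$ and $\min\{x,y\}>0$, and $=3$ otherwise, localising colour $1$ in the ``wrong'' box yields a structure seeing only two colours, so the boxes must be chosen compatibly. Organising this, together with bounding the number of parts (equivalently, the depth of the splitting recursion) by a function of $r$ via a Ramsey argument on the auxiliary graph of leaves, is where essentially all the effort goes, and I expect it to mirror the proof of Theorem~\ref{thm:finitetheorem}, with non-meagreness of the colour classes playing the role of $\ep$-balancedness. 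Everything else --- the Baire property, Kuratowski--Ulam, Mycielski's theorem, the $\Delta$-canonical form --- is standard descriptive set theory.
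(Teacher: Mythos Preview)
Your reduction step is sound, and the idea of finishing with a Cantor-scheme / Mycielski extraction is right. But the ``building'' phase is where you diverge from the paper and create difficulties you do not need.

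The obstruction you flag --- that no colour need be comeagre on any $[W]^2$ --- is real, and your proposed fix via a Baire-category canonical Ramsey / $\Delta$-canonical form is heavy machinery you have not actually specified (and whose canonical types need not be constant, so you would still have work to do). The paper sidesteps this entirely: it never asks for vertex colours to be comeagre on $[W_v]^2$. Instead it runs the Cantor scheme \emph{only} to control the cross-products $U_i\times U_j$, while simultaneously staying inside a comeagre set on which $f$ is continuous (via the Continuous Restriction lemma). This produces Cantor sets $C_1,\ldots,C_{2r}$ with every bipartite piece $C_i\times C_j$ monochromatic and $f$ continuous on each $[C_i]^2$. The vertex colours are then obtained in one stroke by applying Galvin's theorem inside each $C_i$: continuity makes the colour classes on $[C_i]^2$ open, so Galvin gives a homogeneous Cantor subset. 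No comeagreness of a colour on a square, and no canonicalisation, is needed.

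Your ``main obstacle'' --- getting a single finite $F$ that sees all $r$ colours, with the boxes chosen compatibly --- is also far simpler than you suggest. No recursive splitting, no Ramsey argument on leaves, and nothing mirroring dependent random choice is needed. One pass of the Localisation lemma on each non-meagre colour class yields disjoint open $U_1,\ldots,U_{2r}$ with $U_{2k-1}\times U_{2k}$ comeagre in colour $k$; this alone forces all $r$ colours to appear among the cross-products. A second pass of localisation then shrinks the $U_i$ so that \emph{every} remaining pair $U_i\times U_j$ is comeagre in some colour. That is the whole ``building'' step. Your three-colour example does not cause trouble: the first localisation is done colour by colour, and the second only refines those boxes, so you can never end up having ``localised colour $1$ in the wrong box''.
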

We give the necessary definitions in Section \ref{sec:infinitestuff}. The proof techniques for Theorems \ref{thm:finitetheorem}
and \ref{thm:infinitetheorem} are very similar, except topological lemmas such as the Localization lemma in Theorem \ref{thm:infinitetheorem} take the place of combinatorial lemmas such at the Dependent Random Choice lemma in Theorem \ref{thm:finitetheorem}.
\vspace{2mm}
\par Throughout the paper, we omit ceiling and floor signs, notably while stating cardinalities of certain sets. As we are concerned only with asymptotics, this does not have an effect on the results.
\section{Proof of Theorem \ref{thm:finitetheorem}}
\par We start with the simple observation that if we can find a complete subgraph $H$ which is the disjoint union of monochromatic cliques of size $k$, the edge set between any two such monochromatic clique is also monochromatic (call $H$ ``blockwise monochromatic'' if it is of this form), and further, all $r$ colors appear somewhere in $H$, then $H$ must contain $(r,k)$-unavoidable graph.
\par We should note that if one is not interested in asymptotically tight bounds, getting a blockwise monochromatic graph which uses all $r$ colors is not difficult. Indeed, a celebrated theorem by K\H{o}vari, S\'os and Tur\'an \cite{KoSoTu} states that if a color class has an $\ep$-fraction of the edge set, then we can find a monochromatic subgraph $K_{s,s}$ where $s=c_\ep \log{n}$ and $c_\ep>0$ depends only on $\ep$. Invoking the K\H{o}vari-S\'os-Tur\'an theorem on each of the $r$ color classes in an $\ep$-balanced $K_n$, we obtain already obtain a subgraph that uses all $r$ colors. Now, we can apply Ramsey's theorem to all $2r$ parts to obtain monochromatic cliques, and afterwards simply use that between any two subset of the vertices, at least one color class has $1/r$-fraction of the edge set, and iteratively invoke the K\H{o}vari-S\'os-Tur\'an theorem to get a smaller subgraph where the edge-set between all $2r$ monochromatic cliques is also monochromatic. This already proves:
\begin{proposition}\label{prop:prop}
  For any $r\in\mathbb{N}$ and $\ep$ with $0<\ep<1/r$, $R_\ep^r(\mathcal{F}^r_k)< \infty$
\end{proposition}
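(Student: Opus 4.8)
The plan is to prove the quantitatively crude statement that for every $r$ and every $\ep\in(0,1/r)$ there is some $N=N(r,\ep,k)$ so that every $\ep$-balanced $r$-colouring of $K_n$ with $n\ge N$ contains a fully-complete multipartite subgraph on $2r$ parts, each of size $k$, in which all $r$ colours occur; by the observation at the start of this section, any such subgraph contains a color-consistent copy of an element of $\mathcal{F}^r_k$, which is exactly what is needed. The whole argument is bookkeeping around two black boxes: the theorem of K\"ov\'ari, S\'os, and Tur\'an (a graph on $m$ vertices with at least $\delta\binom{m}{2}$ edges, and likewise a bipartite graph between two sets of size $m$ with density at least $\delta$, contains a $K_{s,s}$ with $s\ge c_\delta\log m$) and Ramsey's theorem (an $r$-colouring of $K_m$ has a monochromatic clique of size $\ge c_r\log m$).

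First I would secure, for every colour simultaneously, a disjoint witness. Process the colours $1,\dots,r$ in turn, keeping a working vertex set $U$ that starts as $V(K_n)$. At step $i$ only $O_{r,\ep}(\log n)$ vertices have been deleted from $U$ so far, so $|U|\ge n/2$; and since deleting $O(\log n)$ vertices touches only $O(n\log n)$ edges, a colour class of size $\ge\ep\binom n2$ cannot be emptied, so colour $i$ still occupies at least $(\ep/2)\binom{|U|}{2}$ edges inside $U$. K\"ov\'ari--S\'os--Tur\'an then produces disjoint $X_i,Y_i\subseteq U$, each of size $\lceil c\log n\rceil$ for a fixed $c=c(r,\ep)>0$, with every $X_i$--$Y_i$ edge of colour $i$; delete $X_i\cup Y_i$ from $U$ and move on. The outcome is $2r$ pairwise disjoint sets $X_1,Y_1,\dots,X_r,Y_r$, each of size $\gtrsim\log n$, with colour $i$ charged to the pair $(X_i,Y_i)$.

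Next I would turn these $2r$ sets into the parts of a blow-up. Apply Ramsey's theorem inside each of the $2r$ sets to pass to a monochromatic clique of size $\gtrsim\log\log n$; call the cliques $V_1,\dots,V_{2r}$ (recording which pair carries which colour) and truncate them to a common size. Then go over the $\binom{2r}{2}$ pairs $\{V_a,V_b\}$ one at a time: between $V_a$ and $V_b$ some colour has density $\ge 1/r$ by pigeonhole, so K\"ov\'ari--S\'os--Tur\'an gives a monochromatic $K_{s,s}$ between them; replace $V_a,V_b$ by the two sides of this $K_{s,s}$ and truncate all $2r$ sets to the new common size. Because every operation only \emph{shrinks} the $V_j$, the monochromatic-clique property of each part, the monochromaticity of every previously handled pair, and the colour-$i$ witness on $(X_i,Y_i)$ all persist. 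After all $\binom{2r}{2}$ pairs are processed the common part size is at least $g(n)$, a fixed finite tower of logarithms of $n$ — one extra $\log$ for each application of Ramsey or K\"ov\'ari--S\'os--Tur\'an — and in particular $g(n)\to\infty$. Choosing $N$ with $g(N)\ge k$, for $n\ge N$ we may truncate each $V_j$ to size exactly $k$ and obtain a $k$-blow-up of a fully-coloured complete graph on $2r$ vertices using every colour, which by the opening observation contains a color-consistent copy of an element of $\mathcal{F}^r_k$; hence $R^r_\ep(\mathcal{F}^r_k)\le N<\infty$.

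I expect the main obstacle to be purely organisational. The only place $\ep$-balancedness enters is the first step, and it has to be spent there, before any serious shrinking happens: the delicate point is checking that the successive restrictions (deleting the $X_i\cup Y_i$) cannot annihilate a colour, which is why I keep the total number of deleted vertices at $O(\log n)\ll\ep n$. After that no step needs more than density $1/r$, which costs nothing, so the remaining work is just ordering the operations so that the ``only ever shrink'' invariant holds throughout while every pair gets monochromatised and every part becomes a clique.
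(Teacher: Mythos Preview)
Your proposal is correct and follows essentially the same route as the paper: K\"ov\'ari--S\'os--Tur\'an once per colour to secure $2r$ pieces witnessing all $r$ colours, then Ramsey inside each part, then iterated K\"ov\'ari--S\'os--Tur\'an between pairs to monochromatise the bipartite graphs. You are simply more explicit than the paper's one-paragraph sketch about disjointness and about why the ``only ever shrink'' invariant preserves all previously established monochromatic structures.
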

\par In fact, since all elements of $\mathcal{F}^r_k$ are $\ep_r$-balanced for some $\ep_r<1/(4r^2)$ (as any element of $\mathcal{F}^r_k$ has at most $2r-2$ vertices), we can state:
\begin{proposition}
  For any $r\in\mathbb{N}$, there exists some $0<\ep_r<1/(4r^2)$, such that for all $0<\ep<\ep_r$, we have that for $r$-edge-colored graph $H$ on $k$ vertices, $R_\ep^r(H)<\infty$ if any only if for all $F\in\mathcal{F}^r_k$, $F$ contains a color-consistent copy of $H$.
\end{proposition}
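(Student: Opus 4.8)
The statement follows quickly from the preceding proposition together with an extremal construction, once $\ep_r$ is chosen small enough that the construction stays balanced; I would take $\ep_r:=1/(8r^2)$, or indeed any value in $(0,1/(4r^2))$. For the ``if'' direction, suppose $H$ is a color-consistent subgraph of $F_k$ for every $F\in\mathcal{F}^r$. By the preceding proposition $R^r_\ep(\mathcal{F}^r_k)<\infty$, so for $n$ large every $\ep$-balanced coloring of $K_n$ contains a color-consistent copy of some $G\in\mathcal{F}^r_k$, say $G=F_k$; composing the two embeddings yields a color-consistent copy of $H$, whence $R^r_\ep(H)\le R^r_\ep(\mathcal{F}^r_k)<\infty$. (This half needs nothing beyond $0<\ep<1/r$.)

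For the ``only if'' direction, suppose that $H$, which has $k$ vertices, is not a color-consistent subgraph of $F_k$ for some fixed $F\in\mathcal{F}^r$; write $p:=|V(F)|$, so $p\le 2r-2$ as observed just after the definition of $\mathcal{F}^r$. The first key point is that $F_m$ contains no color-consistent copy of $H$ for any $m\ge k$: any $k$ vertices of $F_m$ meet each of the $p$ parts in at most $k$ vertices, so the colored complete graph they span in $F_m$ is a blow-up of an induced subgraph of $F$ with every part of size at most $k$, and such a graph embeds color-preservingly into $F_k$ (send the $k_j\le k$ chosen vertices of part $j$ into the corresponding part of $F_k$); hence a color-consistent copy of $H$ supported on $k$ vertices of $F_m$ would produce one inside $F_k$, a contradiction. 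The second point is that $F_m$ is $\ep$-balanced once $m$ is large: since $F$ uses all $r$ colors, each color $i$ occupies some $a_i$ vertices and $b_i$ edges of $F$ with $a_i+b_i\ge 1$, so the fraction of color-$i$ edges in $F_m$ tends to $(a_i+2b_i)/p^2\ge 1/(2r-2)^2>1/(4r^2)>\ep_r>\ep$; thus there is $m_0=m_0(r)$ with every color class of $F_m$ having more than $\ep\binom{pm}{2}$ edges for all $m\ge m_0$. Since $pm\to\infty$, this exhibits $\ep$-balanced colorings of arbitrarily large complete graphs with no color-consistent copy of $H$, i.e. $R^r_\ep(H)=\infty$.

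The proof is short and the only step that needs a little care is the first point of the ``only if'' direction — that a single forbidden $k$-blow-up $F_k$ automatically forbids $H$ in every larger blow-up $F_m$ — since this is precisely what turns one finite obstruction into an infinite family of counterexamples. The balance computation is routine given $|V(F)|\le 2r-2$, and the assumption $\ep<\ep_r<1/(4r^2)$ enters only to keep the construction $F_m$ balanced, matching the technicality flagged after Theorem \ref{thm:finitetheorem} (for larger $\ep$ some $F\in\mathcal{F}^r$ admits no balanced blow-up, so $H$ need not embed into every element of $\mathcal{F}^r_k$).
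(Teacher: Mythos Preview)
Your argument is correct and follows exactly the approach the paper indicates: the paper offers no formal proof here, only the one-line observation preceding the proposition that blow-ups of elements of $\mathcal{F}^r$ are $\ep_r$-balanced because $|V(F)|\le 2r-2$, together with the previous proposition for the ``if'' direction. You make explicit the one point the paper leaves implicit, namely that $H\not\hookrightarrow F_k$ forces $H\not\hookrightarrow F_m$ for every $m\ge k$ (via the observation that any $k$ vertices of $F_m$ span a subgraph embedding into $F_k$), and you carry out the limiting density computation $(a_i+2b_i)/p^2\ge 1/(2r-2)^2$ that justifies the choice of $\ep_r$; both are routine but worth spelling out.
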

When $r=2$, the argument we have sketched above seems to give the simplest proof of Bollobas's conjecture. However, for arbitrary $r$, it gives a tower-type bound where the height of the tower is $\Theta(r)$. On the other hand, our Theorem \ref{thm:finitetheorem}, combined with a simple probabilistic construction, will establish that the correct order of magnitude of $R_\ep^r(\mathcal{F}^r_k)$ at $\ep^{-ck}$, where $c$ is some positive constant that depends only on $r$.
\par However, the layout of the better proof will be very similar in spirit to the simpler argument we provided above. In particular, we will again aim to find a fully-complete multipartite subgraph that uses all $r$ colors. The trick will be to use the dependent random choice technique, which recently proved to be a powerful tool in combinatorics. We refer the reader to the survey by Fox and Sudakov for an overview  \cite{drc}. We should also note that Fox and Sudakov also used the dependent random choice technique to obtain the asymptotically sharp bound on $R_\ep^r(\mathcal{F}^r_k)$ when $r=2$, in \cite{stuff}. However, their method also relied on the fact that in an $\ep$-balanced graph with just two colors, there exists a large subset of vertices which have high degree in red as well as blue. For $\ep$-balanced graphs with more colors (already for $r=3$, see Figure \ref{fig:3colors}), sets with high degree on every color do not necessarily exist.
\par The following lemma is the most basic form of the technique, a proof can be found in \cite{drc}:
\begin{lemma}\label{lem:basicdrc}
  Let $G$ be a graph with average degree $\ep n$. Then, there exists a subset $W$ with $|W|=w$ such that all $k_0$ sized subsets of $W$ have a common neighborhood of size $\beta n$, provided that there exists a positive integer $t$ satisfying:
  $$
    n\ep^t-n^{k_0}\beta^t\geq w
  $$
\end{lemma}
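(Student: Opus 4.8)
The plan is to run the standard probabilistic ``random neighborhood'' argument underlying dependent random choice. First I would sample a multiset $T$ of $t$ vertices of $G$ uniformly and independently at random (sampling with repetition, so that the relevant probabilities factor cleanly), and set $A := N(T)$, the set of vertices adjacent to every vertex of $T$. The key feature of this choice is that a fixed vertex $v$ lies in $A$ with probability exactly $(d(v)/n)^t$, so by linearity of expectation $\expected[|A|] = \sum_{v\in V(G)} (d(v)/n)^t$. Applying convexity of $x\mapsto x^t$ (equivalently, the power-mean inequality) to the degree sequence and using that the average degree is $\ep n$, this is at least $n\cdot\ep^t$.

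Next I would bound the number of ``bad'' $k_0$-subsets of $A$, where a $k_0$-set $S$ is \emph{bad} if its common neighborhood has fewer than $\beta n$ vertices. For a fixed bad $S$, the event $S\subseteq A$ is precisely the event $T\subseteq N(S)$, which has probability $(|N(S)|/n)^t < \beta^t$. Summing over the at most $n^{k_0}$ choices of $S$, the expected number $Y$ of bad $k_0$-subsets contained in $A$ is at most $n^{k_0}\beta^t$. Hence $\expected[\,|A| - Y\,] \geq n\ep^t - n^{k_0}\beta^t \geq w$, and we may fix an outcome of $T$ for which $|A| - Y \geq w$.

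To finish, I would delete one vertex from each bad $k_0$-subset of $A$; this discards at most $Y$ vertices and leaves a set $W_0\subseteq A$ with $|W_0|\geq |A|-Y\geq w$ containing no bad $k_0$-subset, so every $k_0$-subset of $W_0$ has common neighborhood of size at least $\beta n$. Any $w$-element subset $W\subseteq W_0$ then satisfies the conclusion, since passing to a subset cannot create a bad $k_0$-subset.

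As for the main obstacle: there is essentially no deep difficulty here, as this is the textbook dependent random choice computation. The only points requiring care are (i) sampling $T$ with repetition so the probabilities multiply, (ii) invoking convexity correctly to pass from the average degree bound to $\sum_v (d(v)/n)^t \geq n\ep^t$, and (iii) using the crude bound $n^{k_0}$ (rather than $\binom{n}{k_0}$) on the number of $k_0$-subsets so as to match the inequality in the statement; fixing $|W|=w$ exactly is then immediate by discarding surplus vertices.
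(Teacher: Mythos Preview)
Your proposal is correct and is exactly the standard dependent random choice argument the paper has in mind: the paper does not prove Lemma~\ref{lem:basicdrc} itself but refers to \cite{drc}, and the proof it gives for the bipartite analogue (Lemma~\ref{lem:bipartitedrc}) follows precisely the same steps you outline---sample $T$ with repetition, lower-bound $\expected[|U|]$ via convexity, upper-bound the expected number of bad $k_0$-sets by $n^{k_0}\beta^t$, and delete one vertex from each bad set.
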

We will need a bipartite version of this lemma, which we state and prove below. We remark that the proof of the below lemma is very similar to the proof of the preceding lemma that can be found in \cite{drc}.
\begin{lemma}\label{lem:bipartitedrc}
  Let $G$ be a graph partitioned into vertex sets $A$ and $B$, with $|A|=m$ and $|B|=m'$, and at least $\ep mm'$ edges between $A$ and $B$. Then, there exists $W\subseteq A$, with $|W|=w$, such that every subset $K\subset W$ with $|K|=k_0$ has $\beta m'$ common neighbors in $B$, provided that there exists a positive integer $t$ satisfying:
  $$
    m\ep^{t}-m^{k_0}\beta^t\geq w
  $$
\end{lemma}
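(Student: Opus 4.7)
The plan is to adapt the standard dependent random choice argument to the bipartite setting. The natural move is to sample a random $t$-tuple $T=(v_1,\dots,v_t)$ of vertices from $B$ uniformly and independently with replacement, and set $X:=\{u\in A:\{v_1,\dots,v_t\}\subseteq N(u)\}$, the common neighborhood of $T$ in $A$. The set $X$ will serve as a candidate for $W$ after a small cleanup step.

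First I would lower-bound $\expected[|X|]$. For each $u\in A$, writing $d(u)$ for the degree of $u$ into $B$, the probability that $u$ lies in $X$ is exactly $(d(u)/m')^t$. By linearity of expectation,
\[
\expected[|X|]=\sum_{u\in A}\left(\frac{d(u)}{m'}\right)^t \ge m\left(\frac{\sum_{u\in A}d(u)}{mm'}\right)^t \ge m\ep^t,
\]
where the middle inequality is Jensen's (the map $x\mapsto x^t$ is convex on $[0,\infty)$) and the final inequality uses the assumption that there are at least $\ep mm'$ edges between $A$ and $B$.

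Next I would control the ``bad'' $k_0$-subsets. Call a subset $K\subseteq A$ with $|K|=k_0$ \emph{bad} if its common neighborhood in $B$ has fewer than $\beta m'$ vertices. Let $Y$ denote the number of bad $k_0$-subsets contained in $X$. A fixed bad $K$ is contained in $X$ iff every $v_i$ lies in the common $B$-neighborhood of $K$, which happens with probability strictly less than $\beta^t$. Summing over at most $\binom{m}{k_0}\le m^{k_0}$ choices of $K$ gives $\expected[Y]\le m^{k_0}\beta^t$. Hence
\[
\expected[|X|-Y]\ \ge\ m\ep^t-m^{k_0}\beta^t\ \ge\ w,
\]
so there is some realization of $T$ with $|X|-Y\ge w$. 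Fix such a $T$, delete one vertex from each bad $k_0$-subset surviving in $X$, and let $W$ be the remaining set; then $|W|\ge w$ and $W$ contains no bad $k_0$-subset, which is exactly the desired conclusion.

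There is no real obstacle here: the argument is a direct bipartite variant of Lemma~\ref{lem:basicdrc}, with the only subtlety being the careful application of Jensen's inequality using $\sum d(u)\ge \ep mm'$ rather than a one-sided average degree condition. If a clean $|W|=w$ (rather than $|W|\ge w$) is required in the statement, one simply discards extra vertices at the end.
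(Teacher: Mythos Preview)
Your argument is correct and matches the paper's proof essentially line for line: sample a random $t$-tuple from $B$ with repetition, lower-bound the expected size of its common neighborhood in $A$ by convexity, upper-bound the expected number of bad $k_0$-subsets by $m^{k_0}\beta^t$, and clean up. Even the handling of $|W|=w$ versus $|W|\ge w$ is the same.
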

\begin{proof}
  $N_X(v)$ denotes the neighborhood of $v$ in $X$, and $N_X(Y)$ denotes the \textit{common} neighborhood of $Y$ in $X$.
  Sample a subset $T\subseteq B$ of size $t$ uniformly at random with repetition. Let $U= N_A(T)$. Note that a vertex $a\in A$ is in $U$ if and only if $T\subseteq N_B(a)$. Using this fact and linearity of expectation, we calculate:
  $$\expected[\,|U|\,]=\sum_{a\in A}\left(\frac{|N_{B}(a)|}{m'}\right)^{t}\geq \sum_{a\in A}\left(\frac{\ep m'}{m'}\right)^{t} = m\ep^{t},$$
  where the inequality holds by convexity of $x^t$. Now, if we remove from $U$ an element from every $k_0$-element subset $R$ which fails to have $|N_{B}(R)|\geq \beta m'$, the resulting set will have the desired properties. Let $Z$ be the random variable denoting how many such $k_0$-element subsets are there in $U$.
  $$\expected\left[Z\right]=\sum_{\substack{R\in\binom{A}{k_0}\\N_{B}(R)<\beta m'}}\left(\frac{N_{B}(R)}{m'}\right)^t\leq\sum_{\substack{R\in\binom{A}{k_0}\\N_{B}(R)<\beta m'}}\beta^t\leq m^{k_0}\beta^t$$
Thus, on average, we would have to remove at most  $m^{k_0}\beta^t$ vertices to modify $U$ to create a set $W$ with the desired properties (removing one vertex from each ``bad'' set $R$). Using linearity of expectation one last time, we derive $\expected[W]\geq\expected[\,|U|-Z]\geq m\ep^{t}-m^{k_0}\beta^t$, so in particular, a $W$ with at least this size must exist, as claimed. \end{proof}
The below corollary simply follows by iterating the above argument, and will be convenient when we establish connections between multipartite graphs. If the edges of a graph are colored, define $N_X^c(Y)$ to be the set of common neighbors of $Y$ in $X$ through edges of color $c$.
\begin{corollary}\label{cor:multipartitedrc}

   Let $r$ and $y$ be positive integers, $r\geq 2$. There exists $N=N(r,y)$ with the following: let $A, B_1, \dots, B_y$ be disjoint vertex sets, all of size $n>N$. Consider an $r$-coloring of the edges of the complete bipartite graph between $A$ and each $B_i$. Then there exists a set $W\subseteq A$, $|W|=n^{2^{-y}}$, and colors $c_1, \dots, c_y$ such that for every $1\leq i\leq y$ and every subset $X\subseteq W$ of size $\frac18\log_rn$ we have $|N_{B_i}^{c_i}(X)|\geq \sqrt{n}$.
\end{corollary}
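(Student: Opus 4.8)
The plan is to iterate the bipartite dependent random choice lemma (Lemma~\ref{lem:bipartitedrc}) once for each index $i=1,\dots,y$, building a nested chain $A=A_0\supseteq A_1\supseteq\dots\supseteq A_y$ inside $A$ and taking $W$ to be a subset of $A_y$. Throughout, fix $k_0=\lfloor\frac18\log_r n\rfloor$ and $\beta=n^{-1/2}$, so that the ``$\beta m'$ common neighbours'' output of the lemma becomes exactly the desired ``at least $\sqrt n$ common neighbours'' once $m'=|B_i|=n$. The target at each step is to roughly halve the exponent: I will arrange $|A_i|\ge|A_{i-1}|^{1/2}$, so that inductively $|A_i|\ge n^{2^{-i}}$ and hence $|A_y|\ge n^{2^{-y}}$.

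At step $i$, write $m:=|A_{i-1}|$ and set $w:=\lceil m^{1/2}\rceil$. The complete bipartite graph between $A_{i-1}$ and $B_i$ is $r$-coloured, so some colour $c_i$ carries at least $\frac1r mn$ of its edges. Apply Lemma~\ref{lem:bipartitedrc} to the colour-$c_i$ subgraph with $\ep=1/r$, $m'=n$, and the parameters $k_0,\beta,w$ above, choosing $t$ to be the largest positive integer with $r^t\le m^{1/2}/2$. One then has to check the hypothesis $m\ep^t-m^{k_0}\beta^t\ge w$. The main term satisfies $m\ep^t=mr^{-t}\ge 2m^{1/2}\ge 2w-2$. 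For the error term, maximality of $t$ gives $t\ge\frac12\log_r m-2$, so $m^{k_0}\beta^t=m^{k_0}n^{-t/2}$ has $\log_r$-value at most $\frac18(\log_r n)(\log_r m)-\frac14(\log_r m)(\log_r n)+\log_r n=(\log_r n)\bigl(1-\tfrac18\log_r m\bigr)$, which is negative once $n$ (hence $m\ge n^{2^{-y}}$) is large in terms of $r,y$. So the error term is at most $1$, the hypothesis holds, and the lemma yields a set $A_i\subseteq A_{i-1}$ with $|A_i|\ge w\ge m^{1/2}$ such that every $k_0$-subset of $A_i$ has at least $\beta n=\sqrt n$ common $c_i$-neighbours in $B_i$.

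After $y$ steps, $|A_y|\ge n^{2^{-y}}$; let $W\subseteq A_y$ have size $n^{2^{-y}}$ (rounding as needed). Since $W\subseteq A_i$ for every $i$, any $X\subseteq W$ with $|X|=k_0=\lfloor\frac18\log_r n\rfloor$ is in particular a $k_0$-subset of $A_i$, whence $N^{c_i}_{B_i}(X)\ge\sqrt n$; this is exactly the claimed conclusion. The constant $N=N(r,y)$ is taken large enough that at every step $m=|A_{i-1}|\ge n^{2^{-(i-1)}}\ge n^{2^{-y}}$ is big enough for the integer $t$ above to exist (e.g.\ $m\ge 4r^2$ suffices), for $w\ge k_0$, and for the error-term bound to hold; one checks that $N(r,y)$ of size roughly $r^{2^{y+O(1)}}$ works.

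The one genuinely delicate point is the choice of $t$: it must be large enough that the ``bad set'' term $m^{k_0}\beta^t$ is negligible, yet small enough that the ``good set'' term $m\ep^t$ stays above $w\approx m^{1/2}$. Balancing these at $r^t\approx m^{1/2}$ does both simultaneously, at the cost of forcing $n$ to be at least roughly $r^{2^y}$ --- which is harmless since $N$ is allowed to depend on $r$ and $y$. Everything else is bookkeeping on set sizes and an easy induction.
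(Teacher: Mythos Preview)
Your proof is correct and follows essentially the same approach as the paper's: build a nested chain $A=A_0\supseteq\cdots\supseteq A_y$ by iterating Lemma~\ref{lem:bipartitedrc} once per $B_i$, picking the majority colour $c_i$ at each step and roughly square-rooting the size so that $|A_i|\ge n^{2^{-i}}$. The only cosmetic difference is in the choice of the auxiliary parameter $t$: the paper sets $t=\tfrac{2^{-(i-1)}}{3}\log_r n$ explicitly, whereas you take $t$ maximal with $r^t\le m^{1/2}/2$ (so $t\approx\tfrac{2^{-(i-1)}}{2}\log_r n$); both choices make $m\ep^t$ comfortably larger than $w$ while driving $m^{k_0}\beta^t$ below $1$, so the verification goes through in either case.
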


\begin{proof} We will construct a nested sequence of sets $A=A_0\supseteq A_1\supseteq\dots\supseteq A_y=W$, with $|A_i|=n^{2^{-i}}$ such that every $X\subseteq A_i$ of size at least $\frac18\log_rn$ has $N_{B_i}^{c_i}(X)\geq \sqrt{n}$. The definition of $A_0$ is clear. As an induction hypothesis, suppose that $A_{i-1}$ has been defined. Let $c_i$ be the most common color in the bipartite graph between $A_{i-1}$ and $B_i$. Apply Lemma \ref{lem:bipartitedrc} to the graph formed by the edges of color $c_i$, with the values $m=n^{2^{-(i-1)}}$, $m'=n$, $\epsilon=r^{-1}$, $w=n^{2^{-i}}$,  $k_0=\frac18\log_rn$, $\beta=\frac1{\sqrt{n}}$, $t=\frac{2^{-(i-1)}}{3}\log_rn$, and denote by $A_i$ the resulting set $W$. This is possible for $n$ large enough (say $n>N_i$): \[m\epsilon^t-m^{k_0}\beta^t=n^{\frac{2\cdot 2^{-(i-1)}}{3}}-n^{-\frac{2^{-(i-1)}}{24}\log_rn}>n^{2^{-i}}\] This proves the statement, with $N=\max N_i$.\end{proof}

\begin{corollary}\label{cor:finaldrc} Let $r$ and $t$ be positive integers, $r\geq 2$. There exists $N=N(r,t)$ with the following: let $n\geq N$ and $A_1, A_2, \dots, A_t$ be disjoint subsets of the vertex set of an $r$-colored complete graph of size $|A_i|=n$. Then there exist subsets $X_i\subset A_i$, of size $|X_i|=\frac{1}{2^{t+1}r}\log_rn$, such that every set $X_i$ is monochromatic and every complete bipartite graph between $X_i$ and $X_j$ is monochromatic.\end{corollary}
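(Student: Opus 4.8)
\emph{Proof proposal.}

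The plan is to process the sets $A_1,\dots,A_t$ one index at a time: in round $i$ I would use Corollary~\ref{cor:multipartitedrc} to cut $A_i$ down to essentially its final form while simultaneously forcing it to interact monochromatically with every $A_j$ with $j>i$ (which I also thin out), and then use Ramsey's theorem inside the resulting set to extract a homogeneous $X_i$. Since Corollary~\ref{cor:multipartitedrc} only costs a polynomial‑in‑$n$ factor, the sets stay large enough that Ramsey's theorem can still produce sets of size $\Theta(\log_r n)$ at the end.

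Concretely, I would maintain, after round $i$, sets $B_{i+1}^{(i)},\dots,B_t^{(i)}$ together with the already‑chosen $X_1,\dots,X_i$, starting from $B_j^{(0)}=A_j$, and preserve the invariants: (a) $B_{i+1}^{(i)},\dots,B_t^{(i)}$ all have size $m_i:=\lfloor n^{2^{-i}}\rfloor$; and (b) for all $i'\le i<j$ the bipartite graph between $X_{i'}$ and $B_j^{(i)}$ is monochromatic. In round $i$ with $1\le i\le t-1$, apply Corollary~\ref{cor:multipartitedrc} with $A=B_i^{(i-1)}$, with $B_{i+1}^{(i-1)},\dots,B_t^{(i-1)}$ in the roles of $B_1,\dots,B_y$ (so $y=t-i$), and with $m_{i-1}$ in the role of $n$. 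This produces $W_i\subseteq B_i^{(i-1)}$ of size $m_{i-1}^{2^{-(t-i)}}=n^{2^{-(t-1)}}$ — note the exponent is $-(i-1)-(t-i)=-(t-1)$, independent of $i$ — together with colours $c_{i,i+1},\dots,c_{i,t}$ such that every subset of $W_i$ of size $\tfrac18\log_r m_{i-1}$ has at least $\sqrt{m_{i-1}}$ common neighbours in $B_j^{(i-1)}$ through colour $c_{ij}$. Now apply Ramsey's theorem inside $W_i$ to find a homogeneous $X_i\subseteq W_i$ with $|X_i|=s:=\big\lceil\tfrac{1}{2^{t+1}r}\log_r n\big\rceil$; this is possible because any $r$‑edge‑colouring of $K_N$ contains a monochromatic clique on at least $\tfrac{1}{2r}\log_r N$ vertices (a standard, far‑from‑tight bound), so $W_i$ contains one on at least $\tfrac{1}{2r}\cdot2^{-(t-1)}\log_r n=\tfrac{1}{2^t r}\log_r n\ge s$ vertices. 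Finally set $B_j^{(i)}:=N_{B_j^{(i-1)}}^{c_{ij}}(X_i)$ for $i<j\le t$ and truncate each to size $m_i$; this is legitimate because $s\le\tfrac18\log_r m_{i-1}$ (an inequality equivalent to $2^{\,i-t+1}\le r$, which holds for all $i\le t-1$ since $r\ge2$), so Corollary~\ref{cor:multipartitedrc} guarantees $\big|N_{B_j^{(i-1)}}^{c_{ij}}(X_i)\big|\ge\sqrt{m_{i-1}}\ge m_i$ (if $|X_i|<\tfrac18\log_r m_{i-1}$, first extend it to that size within $W_i$, which only shrinks the common neighbourhood). In the last round, $i=t$, there is nothing left to coordinate: put $W_t=B_t^{(t-1)}$, which already has size $n^{2^{-(t-1)}}$, and extract a homogeneous $X_t\subseteq W_t$ of size $s$ by Ramsey's theorem.

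The sets $X_1,\dots,X_t$ are then as required: each is homogeneous by construction, and for $i<j$ we have $X_j\subseteq B_j^{(j-1)}\subseteq B_j^{(i)}$, so invariant (b) makes the bipartite graph between $X_i$ and $X_j$ monochromatic. Choosing $N=N(r,t)$ large enough that every application of Corollary~\ref{cor:multipartitedrc} and of Ramsey's theorem above succeeds, it suffices to take $N$ a fixed power (with exponent at most $2^{t-1}$) of the largest threshold that Corollary~\ref{cor:multipartitedrc} demands for parameters $(r,y)$ with $y\le t-1$, allowing a little extra room for the floors and for the (automatically satisfied) Ramsey thresholds.

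I do not expect a genuine obstacle here; the real content is the size bookkeeping. The one point that needs care is the pair of competing constraints on $s$: in every round it must be small enough, $s\le\tfrac18\log_r m_{i-1}$, for Corollary~\ref{cor:multipartitedrc}'s common‑neighbourhood guarantee to apply, yet the target value $s=\Theta(\log_r n)$ must still be realizable by Ramsey's theorem inside $W_i$. Both hold thanks to the convenient fact that $|W_i|=n^{2^{-(t-1)}}$ in every round, which permits a single uniform choice of $s$; once this is noticed, one only has to verify the two displayed inequalities.
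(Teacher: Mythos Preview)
Your proof is correct and is essentially the paper's argument unrolled: the paper proceeds by induction on $t$, applying Corollary~\ref{cor:multipartitedrc} with $A=A_t$ and $B_j=A_j$ for $j<t$, extracting the homogeneous $X_t$ via Ramsey's theorem, shrinking the remaining $A_j$ to their colour-$c_j$ common neighbourhoods of size $\sqrt{n}$, and then invoking the induction hypothesis on those $t-1$ sets. Your iterative scheme processes the indices in the opposite order but carries out the same computation with the same size losses; in particular your observation that $|W_i|=n^{2^{-(t-1)}}$ is constant across rounds is exactly what the paper's recursion $\frac{1}{2^{i}r}\log_r\sqrt{n}=\frac{1}{2^{i+1}r}\log_r n$ encodes.
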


\begin{proof} Induction on $t$. For $t=1$, this follows from the multicolor version of Ramsey's theorem. Suppose that the result is true for $t=i-1$, and we will prove it for $t=i$. Apply Corollary \ref{cor:multipartitedrc} with $y=i-1$, $A=A_i$ and $B_j=A_j$ to obtain a set $W\subset A_i$ and colors $c_1,\dots, c_{i-1}$ with the properties of Corollary \ref{cor:multipartitedrc}, as long as $n>N_1$, where the value of $N_1$ is given by Corollary \ref{cor:multipartitedrc}. By Ramsey's theorem, $W$ contains a monochromatic set $X_i\subset W$ of size $\frac{1}{4r}\log_r|W|=\frac{1}{2^{i+1}}\log_rn$.
  \par By our hypothesis, for every $1\leq j\leq i-1$ we have $\left|N_{A_j}^{c_j}(X_i)\right|\geq \sqrt{n}$. Let $A'_j\subset N_{A_j}^{c_j}(X_i)$ be subsets of size $\sqrt{n}$. By the induction hypothesis, there exists $N_2$ such that, if $\sqrt{n}>N_2$, there exist subsets $X_j\subset A'_j$ such that each of them is monochromatic and the bipartite graph between every pair of them is monochromatic. Their size is $|X_j|=\frac{1}{2^ir}\log_r\sqrt{n}=\frac{1}{2^{i+1}r}\log_rn$. This proves the statement with $N=\max\{N_1, N_2^2\}$.\end{proof}

\par Note that for $t=r$ we have $|X_i|=C_r\log n$. We are now ready to start the proof of the main theorem.

\begin{proof}[Proof of Theorem \ref{thm:finitetheorem}]  As mentioned to previously, our goal is to find a fully-complete multipartite graph, which uses all $r$ colors, in a large enough $\ep$-balanced $K_n$. In the first step of the proof, we will apply Lemma \ref{lem:basicdrc} to the $r$ graphs induced by the edges colored in the $r$ different color classes. The subsets we collect via the Lemma here will thus necessarily utilize all $r$ colors. Afterwards, we will apply Corollary \ref{cor:finaldrc} to fill in for the connections between the various subsets we collected in the previous step. We now give the details. We assume $n>\ep^{-c_rk}$ where $c_r$ is a sufficiently large constant that only depends on $r$ which will be specified later.
\par Given a color $i\in[r]$, we apply Lemma \ref{lem:basicdrc} with parameters $w=\sqrt{n}$, $k_0=\frac{-\log_\ep n}{8}$, $\beta=\frac{1}{\sqrt{n}}$ and $t=\frac{-\log_\ep n}3$. Thus for each color we obtain a set $W_i$ of size $|W_i|=\sqrt{n}$. These sets are not necessarily disjoint, so we take disjoint subsets $W'_i\subset W_i$ of size $|W'_i|=\frac{\sqrt{n}}{r}$. We can now apply Corollary \ref{cor:finaldrc} to find monochromatic subsets $X_i\subset W'_i$ of size $|X_i|=\frac{1}{2^{r+1}r}\log_r|W'_i|=\frac{1}{2^{r+1}r}\left(\frac12\log_rn-1\right)\geq -\frac{1}{2^{r+3}r}\log_{\ep}n$, pairwise joined by monochromatic graphs. Take a subset $X'_i\subset X_i$ of size $|X'_i|= -\frac{1}{2^{r+3}r}\log_{\ep}n$.

\par Since $|X'_i|\leq \frac{-\log_\ep n}{8}$, we have, by Lemma \ref{lem:basicdrc}, that $|N^i(X'_i)|\geq\sqrt{n}$, where $N^i(\cdot)$ denotes the common neighborhood of $\cdot$ in color $i$. Let $U_i$ be a subset of this common neighborhood of size $\sqrt{n}$. Take subsets $U'_i\subseteq U_i$ of size \begin{equation}\label{eq:ugly}\frac{\sqrt{n}-r\frac{1}{2^{r+3}r}\log_\ep n}{r}\geq \sqrt[3]n \end{equation} which are pairwise disjoint and disjoint from all $X'_i$. Note that the inequality follows because for a large enough choice of $c_r$, $n$ will be sufficiently larger than $r$.
\par We claim that we can take subsets $U''_i\subset U_i'$ of size $\sqrt[4]n$ with the following property: for every $v, v'\in U''_i$ and every $w\in\cup_{j=1}^rX'_j$, the edges $vw$ and $v'w$ have the same color. This holds by the pigeonhole principle; one could associate a base-$r$ vector $\vec{v}_i$ of length $\left|\bigcup\limits_{j=1}^rX'_j\right|$ to each $u_i\in U_i$ where $\vec{v}_i(j)$ denotes the color of the edge $u_i$ sends to the $j^{th}$ vertex in $\left|\bigcup\limits_{j=1}^rX'_j\right|$, and therefore there exists a subset $|U_i''|$ with: \begin{equation*}\frac{|U'_i|}{r^{\left|\bigcup\limits_{j=1}^rX'_j\right|}}\geq \frac{\sqrt[3]n}{r^{\frac{1}{2^{r+3}}\log_rn}}=n^{\frac13-\frac{1}{2^{r+3}}}\geq\sqrt[4]n\end{equation*} elements that all have the same associated vector. Notice that at this point we know that the edges between each $w\in X_j$ and all of $U_k''$ are monochromatic. We wish that the color of these edges do not depend on the choice of $w$.
\par To achieve this, we associate to each vertex $w_i\in X'_j$ a base-$r$ vector $\vec{v}'_i$ of length $r$, where $\vec{v}'_i(h)$ is defined to be the unique color of the edges between $w_i$ and $U_h''$. By the pigeonhole principle, we can find a subset $X''_j\subset X'_j$ of size $\frac{|X'_j|}{r^r}$ where all vertices have the same associated vector. The bipartite graph between $X_j''$ and $U_h''$ is therefore monochromatic, for all $j,h\in [r]$.
\par Finally, apply Corollary \ref{cor:finaldrc} to the sets $U''_i$ to produce monochromatic sets $Y_i\subseteq U_i''$ of size $|Y_i|\geq\frac{1}{2^{r+1}r}\log_r|U''_i|\geq-\frac{1}{2^{r+3}r}\log_\ep n$ pairwise joined by monochromatic graphs. The graph induces on the vertex sets $Y_i$ and $X''_i$ satisfy the properties that we want: \begin{itemize}
\item The sets $Y_i$ and $X''_i$ are monochromatic.
\item The bipartite graphs $Y_iY_j$ and $X''_iX''_j$ are monochromatic (by Corollary \ref{cor:finaldrc}), as well as the bipartite graphs $Y_iX''_j$ (by the pigeonhole principle).
\item Every color appears in the graph, in particular the bipartite graph $Y_iX''_i$ has color $i$.
\end{itemize}
\par Since $|Y_i|\geq -\frac{1}{2^{r+3}r}\log_\ep n$ for every $i$ and $|X_j''| \geq -\frac{1}{2^{r+3}r^{r+1}}\log_\ep n$ for every $j$, we can choose any $n$ such that $n>\ep^{2^{r+3}r^{r+1}k}$ and $n$ is large enough to make inequality \eqref{eq:ugly} true.
\end{proof}
\section{Asymmetric Patterns}
\par In this section, we specialize on getting upper bounds on the function $R_{\ep}(M_{l,k})$.
\vspace{2mm}
\par Given subsets $A$ and $B$ of a $2$-colored graph $G$, we say $A$ is \textit{complete with} $B$ in red (blue) if all the edges in between $A$ and $B$ are red (blue). If $A=\{a\}$, we say $a$ is complete with $B$ in red (blue). We say \textit{red (blue) neighborhood} of a vertex to mean the subset of vertices which to vertex is adjacent to via red (blue) edges. We say the \textit{red (blue) degree of a vertex} to mean the size of its red (blue) neighborhood.
\par The upper bound in Proposition \ref{prop:finiteprop} follows from optimizing the below lemma for small integers.
\begin{lemma}\label{lem:finitelem}
  Let $K_n$ be an $\ep$-balanced graph, where $\ep$ is as large as possible. Then, there exists a subset of vertices $S$ with $|S|\geq cn$ such that for some $x,y\in K_n$, $S$ is complete with $x$ in red and $S$ is complete to $y$ in blue. Further, $c\geq \sqrt{1-\ep}-(1-\ep) + o(1)$
\end{lemma}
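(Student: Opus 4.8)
The plan is to fix a vertex $x$ of maximum red degree and look for $S$ inside $A:=N_{\mathrm{red}}(x)$. Write $\alpha:=|A|/n=d_{\mathrm{red}}(x)/n$; since the coloring is $\ep$-balanced the red edge density is at least $\ep$, so $\alpha\ge\ep$. The point of this choice is that for \emph{any} vertex $y$ the set $S_y:=N_{\mathrm{blue}}(y)\cap A$ is automatically red-complete to $x$ and blue-complete to $y$; thus it suffices to produce a single $y\neq x$ (either inside or outside $A$) with $|S_y|\ge cn$. I will bound $\max_y|S_y|$ from below in three different ways by elementary double counting, and use whichever is best in each range of $\alpha$.

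First, averaging over all $y\neq x$ gives $\sum_{y\neq x}|S_y|=\sum_{z\in A}d_{\mathrm{blue}}(z)$, which counts every blue edge meeting $A$ at least once, hence is at least $(\text{all blue edges})-\binom{(1-\alpha)n}{2}\ge\ep\binom n2-\binom{(1-\alpha)n}{2}$. Dividing by $n-1$ yields $\max_y|S_y|\ge\bigl(\tfrac12(\ep-(1-\alpha)^2)-o(1)\bigr)n$. Since $t\mapsto\tfrac12(\ep-(1-t)^2)$ is increasing and equals $\sqrt{1-\ep}-(1-\ep)$ at $t=\sqrt{1-\ep}$, this already handles the case $\alpha\ge\sqrt{1-\ep}$.

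For $\alpha<\sqrt{1-\ep}$ I will use two more estimates. Choosing $y\in A$ with the most blue neighbours inside $A$ gives $\max_{y\in A}|S_y|\ge 2e_{\mathrm{blue}}(A)/|A|\ge\bigl(\alpha-2e_{\mathrm{red}}(A)/(\alpha n^2)\bigr)n-O(1)$. Choosing $y\in\bar A\setminus\{x\}$ with the fewest red neighbours inside $A$, and using that $x$ sends no blue edge into $A$, gives $\max_{y\in\bar A\setminus\{x\}}|S_y|\ge e_{\mathrm{blue}}(A,\bar A)/(|\bar A|-1)\ge\bigl(\alpha-e_{\mathrm{red}}(A,\bar A)/((1-\alpha)n^2)\bigr)n$. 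These two cooperate precisely because $x$ has \emph{maximum} red degree: every $z\in A$ has red degree at most $\alpha n$, so $2e_{\mathrm{red}}(A)+e_{\mathrm{red}}(A,\bar A)=\sum_{z\in A}d_{\mathrm{red}}(z)\le\alpha^2 n^2$. If both of the last two estimates were below $\bigl(\alpha(1-\alpha)-o(1)\bigr)n$ we would get $2e_{\mathrm{red}}(A)>\alpha^3 n^2$ and $e_{\mathrm{red}}(A,\bar A)>\alpha^2(1-\alpha)n^2$, which sum to more than $\alpha^3 n^2+\alpha^2(1-\alpha)n^2=\alpha^2 n^2$, a contradiction. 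Hence $\max_y|S_y|\ge\alpha(1-\alpha)n-O(1)$, and since $\ep\le\alpha<\sqrt{1-\ep}$, the function $t(1-t)$ is concave, and $\ep(1-\ep)\ge\sqrt{1-\ep}-(1-\ep)$ for $\ep\le\tfrac12$, on this interval $\alpha(1-\alpha)$ is minimised at $\alpha=\sqrt{1-\ep}$, where it equals $\sqrt{1-\ep}-(1-\ep)$. This closes the remaining case.

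The only genuine obstacle is bookkeeping: one must check that the ``global blue-count'' bound and the ``local, degree-constrained'' bounds meet \emph{exactly} at the threshold $\alpha=\sqrt{1-\ep}$, and control the lower-order terms (from $\binom n2$ versus $\tfrac12 n^2$, from $|\bar A|-1$ versus $|\bar A|$, and from $A$ or $\bar A$ possibly being atypically structured, e.g.\ $A$ being a red clique) carefully enough not to lose the constant; none of this is deep but it is where a careless argument would fail. For Proposition~\ref{prop:finiteprop} one keeps the exact binomial fractions instead of passing to the $o(1)$ form and optimises the same three inequalities directly for the relevant small $n$.
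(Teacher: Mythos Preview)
Your argument is correct and lands on the same constant $c=\sqrt{1-\ep}-(1-\ep)$, but by a genuinely different route from the paper. The paper splits on the \emph{spread} $\Delta_R-\delta_R$ of the red degrees: if it exceeds $cn$, the vertices of maximum and minimum red degree serve directly as $x$ and $y$; if not, all red degrees lie in an interval of width $cn$ around $\ep n$, and a single count of red--blue cherries $\sum_v d_R(v)(n-1-d_R(v))\ge(\ep-c)(1-\ep+c)n^3+o(n^3)$ gives, by pigeonhole over ordered pairs, some $(x,y)$ with $|N_R(x)\cap N_B(y)|\ge cn$, the constant $c$ having been chosen precisely so that $c=(\ep-c)(1-\ep+c)$.

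You instead commit from the outset to $x$ being the vertex of maximum red degree and split on the \emph{size} $\alpha=\Delta_R/n$ of $A=N_R(x)$: for $\alpha\ge\sqrt{1-\ep}$ a global blue-edge average over all $y$ suffices, while for $\alpha<\sqrt{1-\ep}$ you play two local averages (over $y\in A$ and over $y\in\bar A\setminus\{x\}$) against the constraint $2e_R(A)+e_R(A,\bar A)=\sum_{z\in A}d_R(z)\le\alpha^2n^2$ coming from the maximality of $d_R(x)$, to force $\max_y|S_y|\ge\alpha(1-\alpha)n-O(1)$. The two case-boundaries (degree spread $cn$ versus $\alpha=\sqrt{1-\ep}$) and the counting identities involved are structurally different, though both arguments are elementary. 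Your approach has the feature of always working inside a single fixed red neighbourhood, which does plausibly make the exact small-$n$ optimisation for Proposition~\ref{prop:finiteprop} more transparent; the paper's cherry count, on the other hand, is a single clean formula and marginally shorter to write down.
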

\begin{proof}
Let $K_n$ be a $\ep$-balanced graph, where $\ep$ is as large as possible. Without loss of generality, $K_n$ contains at most as many red edges as blue edges. Let $\Delta_R$ and $\delta_R$ denote the maximum and minimum red degrees of $K_n$. Also, let $c:=\sqrt{1-\ep}-(1-\ep)$.
\par \textbf{Case 1: } $\Delta_R-\delta_R> cn$. In this case, we simply consider the vertices $x$ and $y$ of maximum and minimum red degree, and observe that $y$ must send at least $cn$ blue edges into the red neighborhood of $x$.
\par \textbf{Case 2: $\Delta_R-\delta_R\leq cn$}. Since the average degree is $\ep n$, we must have $\Delta_R \leq (\ep + c)n$ and $\delta_R \geq (\ep - c)n$. We count the number of paths of length $2$ where one edge is colored red, and the other blue. More precisely, we count the sets of the form $\{x,y,z\}\subseteq V(K_n)$, where $\{x,y\}$ is red and $\{y,z\}$ is blue. Let $d_R(v)$ denote the red degree of $v$. We call such sets rainbow cherries. Then, the number of rainbow cherries is:
\begin{align*}
  \sum_{v\in K_n}d_R(v)(n-1-d_R(v)) &= \sum_{v\in K_n} \left(\frac{n-1}{2}\right)^2-\left(\frac{n-1}{2}-d_R(v)\right)^2\\
  &\geq \sum_{v\in K_n} \left(\frac{n-1}{2}\right)^2-\left(\frac{n-1}{2}-(\ep-c)n\right)^2\\
  &=\sum_{v\in K_n} (\ep-c)(1-\ep+c)n^2 + o(n^2)\\
  &= (\ep-c)(1-\ep+c)n^3 + o(n^3)
\end{align*}
\par As there are at most $n^2$ pairs of vertices, for a particular pair $\{x,z\}$, it must be that there exists at least $(\ep-c)(1-\ep+c)n+o(n)$ other $y$ such that $\{x,y,z\}$ is a rainbow cherry.
Then, for this pair $\{x,z\}$, the intersection of their red and blue neighborhood at least $(\ep-c)(1-\ep+c)n+o(n)$. Since $c= (\ep-c)(1-\ep+c)$ by choice of $c$, the statement of the lemma follows.
\end{proof}
\par We remark that the dependence of $c$ on $\ep$ given in the previous lemma is not optimal. The right dependence is $c=\ep(1-\ep) + o(1)$, which can be proven with a more elaborate counting argument, but we don't include this in the present paper. The sharpness of this constant can be seen by considering a random graph where an edge is colored red with probability $\ep$ and blue otherwise.
\par We now prove the following lemma, which is a generalization of the previous one that will allow us to prove Theorem \ref{thm:asymmetric}.
\begin{lemma}\label{lem:coloronside} For every $\epsilon>0$ and $k$ there exists $c(\epsilon,k)>0$ with the following property: every $\epsilon$-balanced $K_n$ has a subset $S\subseteq V(G)$, $|S|\geq cn-O(1)$ and $A,B\subseteq V(K_n)$ with $|A|,|B|\geq k$ and $A$ is complete with $S$ in red and $B$ is complete with $S$ in blue.
\end{lemma}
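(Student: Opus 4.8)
The plan is to prove this directly by the dependent random choice technique, without going through Lemma~\ref{lem:finitelem}. The idea: first use Lemma~\ref{lem:basicdrc} on the red graph to locate a set $A$ of exactly $k$ vertices whose common red neighborhood $T$ is linear in $n$; then produce $B$ by a second application of the technique, placed according to where the blue edges near $T$ actually live.

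For the first step, the red graph has average degree at least $\ep(n-1)$, so Lemma~\ref{lem:basicdrc} (with the role of $\ep$ there played by, say, $\ep/2$, with $k_0=k$, with $\beta$ a constant slightly below $\ep/2$, and with $w=k$) produces such an $A$ as soon as there is a positive integer $t$ with $n(\ep/2)^t-n^k\beta^t\ge k$. Since $\beta<\ep/2$, the quantity $((\ep/2)/\beta)^t$ eventually dominates $n^{k-1}$, so taking $t=\Theta(\log n)$ (rather than a constant --- this is forced precisely because $k_0=k$) makes the left-hand side tend to infinity, and $A$ exists for $n$ large, with $|T|\ge\beta n$ where $T:=\bigcap_{a\in A}N^{\mathrm{red}}(a)$; note $A\cap T=\emptyset$ since no vertex is red-adjacent to itself. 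Now fix a small threshold $\theta=\theta(\ep,k)$ and let $e$ be the number of blue edges meeting $T$. If $e\ge\theta n|T|$, then at least $e/2$ of these edges either all lie inside $T$ --- in which case the blue graph on $T$ has average degree $\Omega(\theta|T|)$ and Lemma~\ref{lem:basicdrc} applied there yields a $k$-set $B\subseteq T$ with common blue neighborhood of size $\Omega(|T|)$ inside $T$ --- or at least $e/2$ cross between $T$ and $V\setminus T$, in which case Lemma~\ref{lem:bipartitedrc} applied to the blue bipartite graph with parts $V\setminus T$ and $T$ yields a $k$-set $B\subseteq V\setminus T$ with common blue neighborhood of size $\Omega(|T|)$ inside $T$. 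In both subcases take $S$ to be that common blue neighborhood with $A\cup B$ deleted (an $O(1)$ loss): then $S\subseteq T$, so $A$ is complete with $S$ in red, while $B$ is complete with $S$ in blue by construction.

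If instead $e<\theta n|T|$, then $T$ is essentially a red clique carrying almost no blue edges, and we relocate the whole configuration into $V\setminus T$. First, since only $o(n^2)$ blue edges meet $T$, the set $V\setminus T$ carries $\ge\tfrac\ep2\binom n2$ blue edges for $n$ large; in particular $|V\setminus T|=\Omega(n)$. Next let $A'$ be the $k$ vertices of $T$ of smallest blue degree; as the sum of the $k$ smallest terms of a sequence is at most $k$ times the average, $\sum_{a\in A'}\deg^{\mathrm{blue}}(a)<k\theta n$, so deleting $\bigcup_{a\in A'}N^{\mathrm{blue}}(a)$ from $V\setminus T$ leaves a set $S'$ with $|S'|\ge|V\setminus T|-k\theta n=\Omega(n)$ (for $\theta$ small enough) to which every vertex of $A'$ is complete in red. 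The blue graph on $S'$ still has $\Omega(n^2)$ edges, hence linear average degree, so a final application of Lemma~\ref{lem:basicdrc} inside $S'$ gives a $k$-set $B\subseteq S'$ with common blue neighborhood of size $\Omega(n)$; take $S$ to be that neighborhood minus $B$. Then $S\subseteq S'$, so $A'$ is complete with $S$ in red and $B$ is complete with $S$ in blue.

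In every case $|A|=|B|=k$ and $|S|\ge c(\ep,k)n-O(1)$ with the required adjacencies, and disjointness of $S$ from $A\cup B$ (and hence of $A$ from $B$) is arranged by the $O(1)$ deletions. I expect the only real work to be bookkeeping the constants: one must pick $\theta$ small enough that in the last case the leftover set $S'$ simultaneously keeps a linear fraction of $V\setminus T$ and stays blue-dense, while keeping the parameters of the three dependent-random-choice applications compatible so that each admits its required $t=\Theta(\log n)$. Conceptually the single point is the trichotomy for where the blue edges sit relative to $T$: once $A$ (or $A'$) is pinned down, $S$ can always be chosen inside a region on which red-completeness of $A$ is automatic, and that is what lets the two halves of the argument fit together.
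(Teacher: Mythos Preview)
Your argument is essentially correct but takes a genuinely different route from the paper, with one real slip in the parameters. In the first dependent-random-choice application you cannot take $\beta$ ``a constant slightly below $\ep/2$''. With $w=k_0=k$, the inequality $n(\ep/2)^t-n^k\beta^t\ge k$ requires both $n(\ep/2)^t\to\infty$ and $n^{k-1}(\beta/(\ep/2))^t\to 0$; writing $t=C\log n$ these become $C<1/\log(2/\ep)$ and $C>(k-1)/\log((\ep/2)/\beta)$, which are compatible only when $\beta<(\ep/2)^k$. So for $k\ge 2$ and $\beta$ only slightly below $\ep/2$, no $t$ works: your claim that ``the left-hand side tends to infinity'' is false there. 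The fix is painless---take $\beta=(\ep/3)^k$, say---since all you use downstream is that $|T|=\Omega_{\ep,k}(n)$. The same correction applies to the later DRC calls. Also, in Case~B the phrase ``only $o(n^2)$ blue edges meet $T$'' should read ``at most $\theta n^2$''; you then need $\theta$ small in terms of both $\ep$ and $k$ (roughly $\theta<\ep/(8(k+1))$), which you already anticipate. With these adjustments every step goes through.

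The paper's own proof is entirely elementary and avoids dependent random choice. It fixes the $k$ vertices $r'_1,\dots,r'_k$ of largest red degree and $b'_1,\dots,b'_k$ of largest blue degree, and splits on whether $\sum_i(d_B(r'_i)+d_R(b'_i))\le(1-t)n$. If so, the intersection of the red neighbourhoods of the $r'_i$ with the blue neighbourhoods of the $b'_i$ has size $\ge tn-2k$. If not, some $r'_i$ has blue degree $\ge\alpha n$, hence by maximality all but $k-1$ vertices do; balancedness then forces linearly many vertices to also have red degree $\ge\alpha n$, and a direct count of ordered $(2k+1)$-tuples $(v,r_1,\dots,r_k,b_1,\dots,b_k)$ with each $vr_i$ red and each $vb_i$ blue, followed by pigeonhole on $(r_1,\dots,b_k)$, produces $A$, $B$, and $S$. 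Your approach recycles the DRC machinery already built for Theorem~\ref{thm:finitetheorem} and handles the ``where do the blue edges live'' trichotomy cleanly; the paper's degree-and-count argument is shorter, self-contained, and makes the constant $c(\ep,k)$ explicit.
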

\begin{proof}[Proof of Lemma]
Let $t>0$ be a constant such that $0<\frac{1-t}{2k}<\epsilon$. Let $\alpha:=\frac{1-t}{2k}$. Let $r'_1, \dots, r'_k$ and $b'_1, \dots, b'_k$ be the $k$ vertices with the largest red degree and blue degree, respectively. We consider two cases:
\\

\textbf{Case 1:} $\sum_{i=1}^k\left(d_B(r'_i)+d_R(b'_i)\right)\leq (1-t)n$. Then the set $S$ of vertices joined by a red edge to each $r'_i$ and to a blue edge to each $b'_i$ has size at least $tn-2k$. Indeed, the sum bounds from above the number of vertices in $V(G)\setminus (\{r_1',\cdots, r_k'\} \cup \{b_1',\cdots, b_k'\})$ which are adjacent to an $r_i'$ in blue or to a $b_i'$ in red.
\\

\textbf{Case 2:} $\sum_{i=1}^k\left(d_B(r'_i)+d_R(b'_i)\right)\geq (1-t)n$. There is some $i$ such that $d_B(r'_i)\geq \alpha n$ or $d_R(b'_i)\geq\alpha n$. Without loss of generality, assume the former. Then, we have that $d_B(v)\geq \alpha n$ for all but at most $k-1$ vertices $v\in V(G)$.
\\

Let $V_R$ be the set of vertices with $d_R(v)\geq \alpha n$. We can use that $G$ is $\ep$-balanced to find a bound on $|V_R|$:\[2\epsilon{n \choose 2}\leq 2|E_R(G)|=\sum\limits_{v\in V(G)}d_R(v)=\sum\limits_{v\in V_R}d_R(v)+\sum\limits_{v\notin V_R}d_R(v)\leq n|V_R|+\alpha n\left(n-|V_R|\right)\] which rearranges to $|V_R|\geq \frac{\epsilon-\alpha}{1-\alpha}n-\frac{\epsilon}{1-\alpha}$.
\\

Now consider the set $S'$ of $(2k+1)$-tuples of distinct vertices $(v,r_1, \dots, r_k, b_1, \dots, b_k)$ such that, for every $i$, the edge $vr_i$ is red and the edge $vb_i$ is blue. If we fix $v\in V_R\setminus\{r'_1,\dots,r'_k\}$, then the number of choices of $r_1, \dots, r_k, b_1, \dots, b_k$ is $(d_R(v))_k(d_B(v))_k\geq ((\lfloor\alpha n\rfloor)_k)^2$, where $(x)_k$ denotes the falling factorial $x(x-1)\cdots (x-k+1)$. We deduce that $|S'|\geq \left(\frac{\epsilon-\alpha}{1-\alpha}n-\frac{\epsilon}{1-\alpha}-k\right)((\lfloor\alpha n\rfloor)_k)^2$. By the pigeonhole principle, there is a choice of $r_1, \dots, r_k, b_1, \dots, b_k$ for which there are at least $\left(\frac{\epsilon-\alpha}{1-\alpha}n-\frac{\epsilon}{1-\alpha}-k\right)((\lfloor\alpha n\rfloor)_k)^2n^{-2k}$ choices of $v$, which we can put in a set $S$.
\\

Since $\epsilon$ and $k$ do not depend on $n$, this proves the statement for $c=\min\{t,\frac{\epsilon-\alpha}{1-\alpha}\alpha^{2k}\}$.
\end{proof}
\begin{corollary}For every $\ell,r$ and $\epsilon$ there exists $N$ with the following property: every $\epsilon$-balanced $K_n$ on $n>N$ vertices contains disjoint sets $A,B,S$ with $|A|=|B|=\ell$, $|S|=k$, $A$ is complete with $S$ in red and $B$ is complete with $S$ in blue. Moreover, if $\epsilon$ is fixed, then $N\leq C_\epsilon k(4\ell)^{2\ell}$. \end{corollary}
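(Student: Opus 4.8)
The plan is to read the corollary off Lemma~\ref{lem:coloronside} almost directly. Apply that lemma with its internal parameter (called ``$k$'' there) set equal to $\ell$: this produces, for a suitable constant $c=c(\epsilon,\ell)>0$, a set $S$ with $|S|\ge cn-O(1)$ and sets $A,B$ with $|A|,|B|\ge\ell$ such that $A$ is complete with $S$ in red and $B$ is complete with $S$ in blue. The qualitative content of the corollary is then essentially present; what is left is to make the three sets pairwise disjoint and of exactly the prescribed sizes, and then to extract the quantitative bound.

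For the tidying-up step I would first pass to arbitrary $\ell$-element subsets of $A$ and $B$ — completeness with $S$ is inherited by subsets, so this costs nothing — and then delete from $S$ the at most $2\ell$ vertices that lie in $A\cup B$. After this deletion $A$ and $B$ are automatically disjoint, since a common vertex would send edges that are simultaneously all-red and all-blue into the (still nonempty) remaining part of $S$. Finally pass to a $k$-element subset of $S$; this is legitimate provided $cn-O(1)-2\ell\ge k$, and that inequality is exactly what defines the threshold $N$. Note it is linear in $k$ with slope of order $1/c$.

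The only part needing genuine care is the shape of the bound on $N$ once $\epsilon$ is fixed, and this is the main obstacle. Here I would reopen the proof of Lemma~\ref{lem:coloronside}, where $c=\min\{t,\tfrac{\epsilon-\alpha}{1-\alpha}\alpha^{2\ell}\}$ with $\alpha=\tfrac{1-t}{2\ell}$ a free parameter in $(0,\epsilon)$. Choosing $\alpha$ of order $\min\{\epsilon/2,\,1/(4\ell)\}$ keeps $\tfrac{\epsilon-\alpha}{1-\alpha}$ bounded below by a constant depending only on $\epsilon$ while giving $\alpha^{2\ell}\ge(4\ell)^{-2\ell}$ up to a factor depending only on $\epsilon$; hence $1/c$ is $(4\ell)^{2\ell}$ up to an $\epsilon$-dependent constant, and solving $cn\gtrsim k$ yields $N\le C_\epsilon\,k\,(4\ell)^{2\ell}$ after absorbing the additive $O(1)+2\ell$ correction (harmless once $k\gtrsim\ell$, which is the regime relevant to Theorem~\ref{thm:asymmetric}). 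A small amount of extra attention is needed to check that the lower-order terms hidden inside ``$|S|\ge cn-O(1)$'' and inside the falling factorials $(\lfloor\alpha n\rfloor)_\ell$ in the proof of Lemma~\ref{lem:coloronside} do not spoil this, and to dispose of the finitely many small values $\ell<1/(2\epsilon)$ (where the choice $\alpha\approx 1/(4\ell)$ is unavailable) by handling them one at a time and folding the outcome into $C_\epsilon$.
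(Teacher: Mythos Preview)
Your proposal is correct and follows essentially the same route as the paper: apply Lemma~\ref{lem:coloronside} with its parameter set to $\ell$, trim and disjointify, and for the quantitative bound reopen that lemma's proof with $\alpha=1/(4\ell)$ when $\ell$ is large relative to $1/\epsilon$, while absorbing the finitely many small-$\ell$ cases into $C_\epsilon$. The paper phrases the case split as $\epsilon\ell>1$ versus $\epsilon\ell\le 1$ (and in the latter case applies the lemma once with parameter $\lfloor\epsilon^{-1}\rfloor$ rather than treating each small $\ell$ separately), but this is only a cosmetic difference; you are also more careful than the paper about the disjointness bookkeeping.
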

\begin{proof} If $\epsilon\ell>1$, follow the proof above by choosing $t=1/2$ and $\alpha=\frac1{4\ell}$. Then $0<\alpha<\epsilon$. For a value of $n>C_1k(4\ell)^{2\ell}$, where $C_1$ is large enough, we have \[k\leq\min\left\{tn-2\ell, \left(\frac{\epsilon-\alpha}{1-\alpha}n-\frac{\epsilon}{1-\alpha}-\ell\right)((\lfloor\alpha n\rfloor)_\ell)^2n^{-2\ell}\right\}.\]

If $\epsilon\ell\leq1$, then applying Lemma \ref{lem:coloronside} we obtain new constant $C_2:=c(\varepsilon, \lfloor\varepsilon^{-1}\rfloor)$ such that for $n> C_2r$ there exist $A'$, $B'$ and $S$ with $|A'|=|B'|=\lfloor\epsilon^{-1}\rfloor$, $|S|=r$ and the desired properties. Simply take any $A\subseteq A'$, $B\subseteq B'$ of size $\ell$. This proves the statement for $C_\epsilon=\max\{C_1,C_2\}$. \end{proof}

Observe that Theorem \ref{thm:asymmetric} follows immediately from the above Corollary replacing $r$ with $R(r)$.

\section{Infinite Balanced Graphs}\label{sec:infinitestuff}

We now begin considering what natural restrictions we can put on our colorings to generalize our previous results to the infinite case.  The main difficulty here is that it is no longer clear how the notion of being $\ep$-balanced should generalize. For example, we cannot obtain an $M_{\omega, \omega}$ (a countably infinite analogue of an $M_{l,r}$) by merely assuming every vertex has infinite degree in both colors.  Even under the stronger assumption that the back-degrees are balanced (i.e. half of the edges from vertex $i$ to vertices in $[i-1]$ are blue and half red) this is still not enough. The following construction applies to both restrictions.

\begin{proposition}
There exist back-degree balanced $2$-colorings of the complete graph $K$ on $\omega$ vertices such that every vertex has infinite red and blue degree yet $K$ contains no $M_{1,\omega}$.
\end{proposition}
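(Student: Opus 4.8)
The plan is to first turn the statement into a statement about neighbourhoods, and then write down an explicit bitwise coloring. Observe that a color-consistent copy of $M_{1,\omega}$ in a $2$-coloring of $K_\omega$ is precisely a vertex $v$ together with an infinite set $S\not\ni v$ such that $S$ is monochromatic and all edges between $v$ and $S$ receive the other color. So the coloring we want is equivalent to one satisfying: (I) for every vertex $v$, the blue neighbourhood of $v$ contains no infinite red clique; and (II) for every $v$, the red neighbourhood of $v$ contains no infinite blue clique; together with the requirement that every vertex has infinite degree in both colors. (By the infinite Ramsey theorem each neighbourhood, being infinite, will automatically contain an infinite monochromatic clique in the \emph{opposite} color — that is harmless.) Note that (I) and (II) are interchanged by swapping the two colors, so I will look for a construction with that symmetry, in which red neighbourhoods are ``red-biased'' and blue neighbourhoods ``blue-biased''.

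The coloring I would use is the following. Take $V=\mathbb{N}$, written in binary, and for $i\ne j$ let $d(i,j)$ denote the position of the most significant bit in which $i$ and $j$ differ; color the edge $\{i,j\}$ red if $d(i,j)$ is even and blue if $d(i,j)$ is odd. The one elementary computation the argument rests on is: if $d(u,v)=m$ and $d(u',v)=m'$ with $m<m'$, then $d(u,u')=m'$. Indeed $u$ agrees with $v$ on every bit above $m$, hence on bit $m'$ and all higher bits, while $u'$ agrees with $v$ above $m'$ and differs at bit $m'$; so $u$ and $u'$ agree above $m'$ and differ at $m'$.

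To verify the three requirements, fix $v$ and partition the vertices $\ne v$ into classes $C_m=\{u:d(u,v)=m\}$. The edge from $v$ into $C_m$ has color given by the parity of $m$, and by the computation above every edge between two distinct classes $C_m,C_{m'}$ has color given by the parity of $\max(m,m')$. Consequently, any red clique contained in the blue neighbourhood $\bigcup_{m\text{ odd}}C_m$ of $v$ must lie inside a single class $C_m$; but $C_m$ is obtained by flipping bit $m$ of $v$ and choosing the lower $m$ bits freely, so $|C_m|=2^m<\infty$, giving (I). The identical argument with the parities swapped gives (II). Finally, since $C_m\ne\emptyset$ for every $m\ge 0$, the red degree of $v$ equals $\sum_{m\text{ even}}2^m=\infty$ and the blue degree equals $\sum_{m\text{ odd}}2^m=\infty$, which finishes the proof.

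The main obstacle is entirely in \emph{finding} a coloring of this kind; the verification above is short once the coloring is in hand. The natural first attempts — parity of $i+j$, comparability or common-prefix length in a binary tree, the metric on $\mathbb{Q}$ or $\mathbb{R}$, finite-part complete multipartite blow-ups — all fail for the same structural reason: in each of them some blue neighbourhood of a vertex contains a whole ``region'' of the ground set, and such a region always hosts an infinite red clique (or, dually, a monochromatic clique is an independent set in the other color to which some outside vertex is completely joined). The bitwise rule works precisely because it forces any monochromatic clique sitting inside a neighbourhood of $v$ to collapse to a single ``scale'' $C_m$: two vertices lying at different scales relative to $v$ are automatically joined to each other in the color dictated by the coarser scale, which an infinite monochromatic set cannot tolerate.
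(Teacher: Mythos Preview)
Your proof is correct and uses a genuinely different construction from the paper's. The paper takes two disjoint copies $L,R$ of $\mathbb{N}$, colors edges inside $L$ red and inside $R$ blue, and colors a cross-edge $\{i,j\}$ ($i\in L$, $j\in R$) by the order of $i$ and $j$; the verification then reduces to the observation that every infinite monochromatic clique lives (up to at most one stray vertex) inside a single copy, while any outside vertex is joined in the opposite color to only finitely many elements of that copy. Your ultrametric coloring via the most significant differing bit is more homogeneous and enjoys the red/blue symmetry you pointed out, and the ``scale'' decomposition $C_m$ makes the verification very clean. On the other hand, the paper's example is marginally quicker to state and, as noted there, can be tweaked (taking evens and odds as the two copies) to also satisfy the stronger back-degree-balanced hypothesis; your coloring does not have this feature as stated, since for instance vertex $2^m$ sends all of its back-edges into $C_m$ and hence into a single color.
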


\begin{proof}
Consider the graph with vertex set consisting of two disjoint copies of $\mathbb{N}$.  Color all edges between vertices in the right copy blue and all edges between vertices in the left copy red.  For each vertex $i$ in the left copy and $j$ in the right copy, color edge $ij$ blue if $i<j$ and red otherwise.  Notice that this coloring has an $M_{l,r}$ for any $l,r\in\mathbb{N}$ yet no $M_{1,\omega}$. To see that we may choose this coloring to be back-degree balanced, one may start with evens and odds as the partitions.
\end{proof}

On the other hand, if we deal with the graphs defined on Polish spaces the notion of being non-meagre seems to correspond rather well with the density results from the finite version.  For convenience, we recall the necessary definitions and results. The reader may see \cite{kechris} for a more extensive overview. $X$ and $Y$ will always denote topological spaces in the below.

\begin{definition}
We say $X$ is a perfect Polish space if it is separable, completely metrizable, and has no isolated points.
\end{definition}
\begin{definition}
A set $E\subseteq X$ is nowhere dense if its closure has empty interior. We say $E$ is meagre if it is the union of countably many nowhere dense sets, and $E$ is comeagre if $X\setminus E$ is meagre.
\end{definition}
\begin{definition}
  $E\subseteq X$ has the Baire property (BP) if there exists an open set $O$ such that $E\triangle O$ is meagre, and we call $f:X\to Y$ Baire measurable if the preimages of open sets have the BP.
\end{definition}

We now restate Theorem \ref{thm:infinitetheorem} in more generality:
\begin{theorem}\label{thm:inf}
  Suppose $X$ is a perfect Polish Space and  $f:[X]^2\rightarrow [r]$ is Baire measurable and each color class is non-meagre. Then we find some color consistent member of  $\mathcal{F}_{\mathfrak{c}}^r$ where each blown-up clique is a Cantor set.
\end{theorem}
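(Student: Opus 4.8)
The plan is to follow the template of the proof of Theorem~\ref{thm:finitetheorem}. Rather than hunting for a copy of $\mathcal{F}^r_{\mathfrak{c}}$ directly, I would first produce a \emph{fully-complete multipartite} configuration whose parts are Cantor sets and which uses all $r$ colors: pairwise disjoint Cantor sets $C_1,\dots,C_m$ with $m\le 2r$, such that each $[C_j]^2$ is monochromatic, each bipartite graph between $C_i$ and $C_j$ is monochromatic, and every color of $[r]$ occurs somewhere. Contracting each $C_j$ to a point (its vertex color being the color of $[C_j]^2$, edge colors being the colors of the bipartite graphs) gives a fully-colored complete graph on $m$ vertices using all $r$ colors; deleting vertices one at a time, each time only deleting a vertex whose removal still leaves all $r$ colors present, until condition~(2) in the definition of $\mathcal{F}^r$ holds, would leave a fully-colored complete graph satisfying (1) and (2), hence by maximality of $\mathcal{F}^r$ a color-consistent copy of some $F\in\mathcal{F}^r$; re-expanding along the surviving Cantor parts gives the desired copy of $\mathcal{F}^r_{\mathfrak{c}}$ with each blown-up clique a Cantor set. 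This is exactly the reduction used at the start of the proof of Theorem~\ref{thm:finitetheorem}, so the task reduces to building such a configuration.

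First I would preprocess $f$. Since $f$ is Baire measurable into the discrete space $[r]$, it is continuous relative to some comeagre set $G\subseteq[X]^2$, and by the Kuratowski--Mycielski theorem (applied in the perfect Polish space of unordered pairs) any Cantor set constructed below can be required to have its pair set inside $G$; so I may assume $f$ is continuous. Next I would localize each color: as $f^{-1}(i)$ is non-meagre with the Baire property, it is comeagre in some basic open box $U_i\times V_i$, and after shrinking one can take the $U_i, V_i$ well separated with $f\equiv i$ throughout the box. This makes color $i$ ``available between $U_i$ and $V_i$'', which is what will force every color to appear in the final configuration --- the role played by the $r$ separate applications of Lemma~\ref{lem:basicdrc} in the finite proof.

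The core of the argument is to replace dependent random choice and Ramsey's theorem by their topological counterparts. Using the Localization lemma I would convert each witness box into a monochromatic Cantor set in that color together with a ``large'' (non-meagre, hence again comeagre in some box) set of common neighbors in that color --- the analogue of the common-neighborhood sets of Lemma~\ref{lem:basicdrc}, iterated as in Corollary~\ref{cor:finaldrc}. Running this through the $r$ colors, each time passing to common neighborhoods inside the boxes already fixed, should yield the $\le 2r$ Cantor sets, pairwise joined monochromatically and with color $i$ realized between the $i$-th pair; a concluding Baire-category pigeonhole (among the countably many basic boxes, one hosts the whole configuration comeagrely) would clean up the remaining bipartite graphs so that each is genuinely monochromatic. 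The monochromatic Cantor parts themselves require a topological Ramsey-type input --- that after restricting to a suitable perfect subset some color becomes comeagre on a ``diagonal'' box $[W]^2$, whence Kuratowski--Mycielski yields a Cantor set homogeneous in that color --- which one obtains by a fusion argument reducing a continuous coloring of pairs to the branching structure of a tree. Throughout one must also check that the relevant color classes stay non-meagre under restriction, which holds because at each stage we only pass to open sets in which the color in question was already comeagre.

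The step I expect to be the main obstacle is the Localization lemma itself --- the faithful topological substitute for dependent random choice. The obvious hope ``$E$ comeagre in $A\times B$ implies $C_A\times C_B\subseteq E$ for some Cantor sets $C_A\subseteq A$, $C_B\subseteq B$'' is false when $C_A$ is chosen carelessly, since an uncountable intersection of comeagre vertical sections need not be comeagre and can even be empty. So the Cantor set destined to become a part and the Cantor set of its common neighbors have to be built \emph{simultaneously} by a tree/fusion construction, refining finite approximations against the comeagreness of $E$ at every stage, and one must verify this meshes with the Ramsey extraction and with the non-meagreness bookkeeping above. Once the $\le 2r$ monochromatically joined Cantor sets are in hand, the conclusion follows exactly as in the finite case.
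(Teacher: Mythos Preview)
Your overall strategy --- localize to open boxes where each color is comeagre, build the Cantor parts via a simultaneous tree/fusion scheme so that all cross-products land in the comeagre colors, then extract homogeneous Cantor subsets inside each part --- is exactly the paper's. Two places where the paper is cleaner than your sketch deserve comment. First, the reduction ``so I may assume $f$ is continuous'' followed by ``$f\equiv i$ throughout the box'' does not work: passing by Kuratowski--Mycielski to a perfect set whose pair-set lies in the continuity locus $G$ gives no control over whether the color classes remain non-meagre there, and in any case comeagre-in-a-box never upgrades to constant-on-the-box. The paper does not preprocess to continuity; it carries the dense open sets $G_i^m$ witnessing continuity as explicit conditions in the Cantor scheme, so that continuity is only used at the very end, when Galvin's theorem is applied inside each finished $C_i$. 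Second, there is no need to imitate the iterated dependent-random-choice structure of the finite proof (passing to common neighborhoods one color at a time, then a Baire-category pigeonhole to ``clean up the remaining bipartite graphs''). The paper localizes \emph{all} pairwise relations up front --- choosing disjoint open $U_1,\dots,U_{2r}$ with $U_{2k-1}\times U_{2k}$ comeagre in color $k$, and after a second round of localization every $U_i\times U_j$ comeagre in some color --- and then runs a \emph{single} Cantor scheme simultaneously across all $2r$ indices, at stage $n$ refining into $G_i^n$ and into the dense open witnesses $S_{i,j}^n$ for every pair $i,j$. Your final paragraph correctly identifies that a sequential construction fails and that a simultaneous fusion is required; in the paper that simultaneous fusion \emph{is} the whole argument, and nothing iterative precedes it.
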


The following easy consequences of the above definitions will be useful in the proof. Proofs of the following two lemmas can be found in Chapter 8 of \cite{kechris}:

\begin{lemma}[Localization]
Suppose $A\subseteq X$ has the $BP$.  Then either $A$ is meagre or there exists a non-empty open subset $U$ on which $A$ is comeagre.
\end{lemma}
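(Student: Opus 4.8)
The plan is to unwind the definition of the Baire property directly; there is essentially no combinatorial content here. Since $A$ has the BP, fix an open set $O\subseteq X$ with $A\triangle O$ meagre. I would then split into two cases according to whether $O$ is empty.

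If $O=\emptyset$, then $A=A\setminus O\subseteq A\triangle O$, so $A$ itself is meagre and we land in the first alternative. If $O\neq\emptyset$, take $U:=O$; this is a nonempty open set, and I claim $A$ is comeagre on it. Indeed $U\setminus A=O\setminus A\subseteq A\triangle O$, so $U\setminus A$ is a meagre subset of $X$ lying inside the open set $U$.

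The only point requiring care is the standard fact that a meagre subset $M\subseteq X$ contained in an open set $U$ is meagre relative to the subspace $U$. Writing $M=\bigcup_{n}N_n$ with each $N_n$ nowhere dense in $X$, one checks that each $N_n\cap U$ is nowhere dense in $U$: its closure in $U$ is contained in $\overline{N_n}\cap U$, and any nonempty relatively open subset of $\overline{N_n}\cap U$ would be a nonempty open subset of $X$ contained in $\overline{N_n}$, contradicting that $\overline{N_n}$ has empty interior. Hence $M\cap U=\bigcup_n(N_n\cap U)$ is meagre in $U$. Applying this with $M=U\setminus A$ shows that $U\setminus A$ is meagre in $U$, i.e.\ $A$ is comeagre on $U$, which is the second alternative.

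In short, the "main obstacle" is merely cosmetic --- reconciling meagreness in $X$ with meagreness in the open subspace $U$ --- and the whole argument is entirely routine (cf.\ \cite{kechris}).
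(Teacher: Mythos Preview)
Your argument is correct and is the standard proof of this fact. Note that the paper does not actually give its own proof of the Localization lemma; it is quoted there as a known tool from descriptive set theory (with a reference to \cite{kechris}), so there is nothing to compare against beyond observing that your write-up matches the textbook argument.
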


\begin{lemma}[Continuous Restriction]
Let $X,Y$ be Polish spaces and $f: X\rightarrow Y$ Baire measurable.  Then $f$ is continuous on some countable intersection of dense open sets.
\end{lemma}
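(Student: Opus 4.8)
The plan is to run the classical descriptive-set-theory argument: a Baire measurable function becomes continuous after deleting a meagre set, and that meagre set can be chosen to be an $F_\sigma$, so its complement is genuinely a countable intersection of dense open sets. First I would use that $Y$ is second countable to fix a countable basis $\{V_n : n\in\mathbb N\}$ of $Y$. Since $f$ is Baire measurable, each $f^{-1}(V_n)$ has the Baire property, so we may write $f^{-1}(V_n)=U_n\triangle M_n$ with $U_n\subseteq X$ open and $M_n\subseteq X$ meagre.

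Next I would assemble the exceptional set. Writing each $M_n$ as a countable union $M_n=\bigcup_k N_{n,k}$ of nowhere dense sets, set $F:=\bigcup_{n,k}\overline{N_{n,k}}$; this is a meagre $F_\sigma$ set containing $M:=\bigcup_n M_n$. Let $G:=X\setminus F=\bigcap_{n,k}\bigl(X\setminus\overline{N_{n,k}}\bigr)$. Each set $X\setminus\overline{N_{n,k}}$ is open and dense, since its complement $\overline{N_{n,k}}$ is closed and nowhere dense; hence $G$ is a countable intersection of dense open sets, exactly of the required form. Because $X$ is Polish, hence a Baire space, $G$ is moreover dense and nonempty. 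The key property I will use is that $G\cap M_n=\emptyset$ for every $n$, which holds since $M_n\subseteq M\subseteq F$.

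Finally I would verify that $f|_G$ is continuous. Fix $n$. Since $G$ is disjoint from $M_n$, intersecting $f^{-1}(V_n)=U_n\triangle M_n$ with $G$ removes the $M_n$ contribution: $f^{-1}(V_n)\cap G=U_n\cap G$. Therefore $(f|_G)^{-1}(V_n)=U_n\cap G$ is relatively open in $G$ for every $n$, and since $\{V_n\}$ is a basis of $Y$, the preimage under $f|_G$ of an arbitrary open subset of $Y$ is a union of sets of this form, hence relatively open. Thus $f|_G$ is continuous, which proves the lemma.

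I do not expect a substantial obstacle here: the statement is a standard fact, and the proof is essentially bookkeeping. The only point requiring a little care is the passage from ``comeagre complement of the $M_n$'s'' to ``countable intersection of dense open sets'' — one must observe that every meagre set is contained in a meagre $F_\sigma$ set, namely the union of the closures of a covering family of nowhere dense sets, which is precisely what the choice of $F$ above accomplishes. The hypothesis that $X$ is Polish enters only through the Baire category theorem, and is used solely to guarantee that $G$ is dense (in particular nonempty) rather than possibly trivial.
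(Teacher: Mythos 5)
Your argument is correct, and it is the standard proof of this fact (Kechris, Classical Descriptive Set Theory, Theorem 8.38): the paper itself does not prove this lemma but simply quotes it as a known result, citing \cite{kechris}. Every step checks out — writing $f^{-1}(V_n)=U_n\triangle M_n$ for a countable basis $\{V_n\}$ of $Y$, enlarging $\bigcup_n M_n$ to a meagre $F_\sigma$ set $F$ so that $G=X\setminus F$ is a genuine countable intersection of dense open sets, and observing $f^{-1}(V_n)\cap G=U_n\cap G$ — and your closing remark is also accurate: Polishness of $X$ is needed only (via the Baire category theorem) to make $G$ dense and nonempty, which is what makes the lemma usable in the paper's Cantor-scheme construction.
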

\par Recall that using the axiom of choice \cite{sier}, one can define graphs on real numbers that fail to have a continuum sized monochromatic clique. Galvin's theorem \cite{Galvin}, stated below, allows us to find a monochromatic clique which is a Cantor set (which is in particular continuum sized), if all the color classes have the BP.

\begin{theorem}[Galvin]\label{lem:galvin}
Suppose $X$ is a perfect Polish space, and $[X]^2=P_0\cup...\cup P_i$ is a partition where each $P_i$ has the BP (as subsets of $X^2$).  Then there is a Cantor set $C\subseteq X$ such that $[C]^2\subseteq P_i$ for some $i$.
\end{theorem}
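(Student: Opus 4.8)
The theorem is classical and underlies many ``continuum Ramsey'' arguments; the plan is the standard one — reduce to a \emph{continuous} colouring by a fusion (Cantor scheme) argument, then handle continuous colourings by a second fusion together with a pigeonhole over the branching levels of a tree. (One could instead deduce it from the Galvin--Prikry theorem, but a direct fusion is shortest here.)

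\emph{Reduction to continuous colourings.} Let $\chi\colon[X]^2\to\{0,\dots,i\}$ send $e$ to the index $j$ with $e\in P_j$. Since $[X]^2$ is an open, hence Polish, subset of $X^2$ and each $P_j$ has the Baire property, the Continuous Restriction lemma gives a dense $G_\delta$ set $\Gamma=\bigcap_n\Gamma_n\subseteq[X]^2$ with each $\Gamma_n$ open dense and $\chi\restriction\Gamma$ continuous. I would then build a Cantor scheme $\{U_s:s\in 2^{<\omega}\}$ of non-empty open subsets of $X$, with $\overline{U_{s0}},\overline{U_{s1}}\subseteq U_s$, distinct strings of equal length giving disjoint closures, $\operatorname{diam}(U_s)\to 0$, and the single extra requirement that at level $n$, for all distinct $s,t$ of length $n$ one has $U_s\times U_t\subseteq\Gamma_n$. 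Each level imposes only finitely many constraints, and since each $\Gamma_n$ is open dense and $U_s\times U_t$ is a non-empty open box inside $[X]^2$, every shrinking is possible. The resulting Cantor set $C_0=\bigcap_n\bigcup_{|s|=n}\overline{U_s}$ then satisfies $[C_0]^2\subseteq\Gamma$, so $\chi\restriction[C_0]^2$ is continuous; identifying $C_0$ with $2^\omega$, it suffices to prove the theorem for a continuous colouring of $[2^\omega]^2$.

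\emph{The continuous case.} For each $\ell$ the set of pairs $(x,y)\in[2^\omega]^2$ whose longest common initial segment has length exactly $\ell$ is clopen in $(2^\omega)^2$, hence compact, so by continuity of $\chi$ there is $N(\ell)$ such that on this set $\chi(x,y)$ depends only on $x\restriction N(\ell)$ and $y\restriction N(\ell)$. Using this I would run a second fusion inside $C_0$, producing a Cantor scheme $\{V_s\}$ such that for every $s$ the colour $\chi$ is \emph{constant}, with value $c_s$, on the ``crossing rectangle'' $V_{s0}\times V_{s1}$: at stage $|s|$ one shrinks $V_{s0}$ and $V_{s1}$ to a subrectangle on which the finite-valued continuous function $\chi$ is constant (possible since the preimage of some value has non-empty interior, hence contains a product of clopen pieces), leaving earlier stages untouched. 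The outcome is a Cantor set $C$ with $[C]^2=\bigsqcup_s R_s$, where $R_s$ is the set of pairs whose longest common stem is $s$, and $\chi\equiv c_s$ on $R_s$. Finally a pigeonhole on the tree yields a node $u^{*}$ and a colour $j^{*}$ with $c^{-1}(j^{*})$ dense above $u^{*}$ (i.e. every node above $u^{*}$ has an extension in $c^{-1}(j^{*})$; otherwise one repeatedly passes below nodes that avoid a colour, exhausting the palette in at most $i+1$ steps). Building a perfect subtree above $u^{*}$ all of whose branching nodes lie in $c^{-1}(j^{*})$ and taking the associated sub-Cantor-set $C'\subseteq C$ gives $[C']^2\subseteq\bigcup\{R_s:c_s=j^{*}\}\subseteq P_{j^{*}}$, as required.

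\emph{Main obstacle.} The essential difficulty is not any single clever idea but the bookkeeping in the two fusion arguments: one must check that the finitely many shrinkings performed at each level to meet the new constraints are compatible both with the Cantor-scheme conditions and with everything imposed at earlier levels, and that the limit is genuinely a Cantor set. The one conceptual point is that crossing rectangles at different nodes may receive different colours $c_s$, so monochromaticity is not delivered by the fusion itself; it emerges only after the tree pigeonhole extracts a sub-Cantor-set supported on branching nodes of a single colour.
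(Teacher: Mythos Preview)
The paper does not prove Galvin's theorem; it is quoted as a classical result and used as a black box in the proof of Theorem~\ref{thm:inf}, so there is no ``paper's own proof'' to compare against. Your proposal is a correct outline of the standard proof: first a Cantor scheme pushing all off-diagonal pairs into the comeagre set on which the colouring is continuous (via the Continuous Restriction lemma the paper also quotes), then a second fusion on $2^\omega$ making each sibling rectangle $V_{s0}\times V_{s1}$ monochromatic, followed by the tree pigeonhole that finds a colour dense above some node and a perfect subtree whose splitting nodes all carry that colour. One minor remark: the $N(\ell)$ observation is not actually used in your second fusion---once $\chi$ is continuous and finite-valued on $2^\omega\times 2^\omega$, each preimage is clopen and hence any non-empty one contains a basic clopen rectangle, which is all you need to shrink $V_{s0},V_{s1}$; you could drop that sentence without loss.
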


Now, before beginning the proof of Theorem \ref{thm:inf} we outline the main idea, emphasizing the similarities to the proof of Theorem \ref{thm:finitetheorem}.  Just as Theorem \ref{lem:galvin} generalizes Ramsey's theorem, the KST theorem admits a similar generalization: if $A\subseteq [X]^{2}$ is non-meagre, then we can find Cantor sets $C_1,C_2$ such that $C_1\times C_2\subseteq A$.  Using this, one could hope that we could replicate the proof of Proposition \ref{prop:prop} given at the start of section 2.  However, recall that the bounds given in that proof were rather poor.  This issue is more substantial in the BP setting: after running the analogue of KST even a single time we are left with meagre sets, where the coloring function can be too poorly behaved to continue.  In the finite setting we were able to get around this issue by utilizing Corollary \ref{cor:multipartitedrc}, which allowed us to do many restrictions simultaneously.  Here we will employ a similar idea.  In particular, rather than using a variation of KST directly we will instead prove a generalisation of it along the lines of Corollary \ref{cor:multipartitedrc}.  Moreover, we will ensure that the coloring function is continuous on the Cantor sets we obtain, which will allow us to apply Theorem \ref{lem:galvin} despite these sets being meagre.

\begin{proof}[Proof of Theorem \ref{thm:inf}]

Analogously to beginning with monochromatic bipartite graphs in the finite case, we start by localizing to disjoint open sets $U_1,...,U_{2r}$ such that $U_{2k-1}\times U_{2k}$ is comeagre in color $k$ for $k\in [r]$.   We note that this step preserves structure: for each $i,j\in [2r]$ and $k\in[r]$, $U_i\times U_j$ is open in $X^2$ and $(U_i\times U_j)\cap f^{-1}(k)$ has the BP (in $U_i\times U_j$).  Therefore, if necessary, we can localize again and rename the sets to also ensure each $U_i\times U_j$ is comeagre in some color.

\vspace{3mm}

Now, we begin collecting an assortment of well-behaved subsets within and between each $U_i$. As for each $i\in[2r]$, $U_i$ is open, $f|[U_i]^2$ is still Baire measurable. So, we can find countable intersections of dense open subsets $G_1:=\bigcap_{m\in\mathbb{N}} G_1^m$,$\,\cdots\,$,$\,\,G_{2r}=\bigcap_m G^m_{2r}$ where $f$ is continuous.  Further, for every $i,j\in[2r]$ with $i\neq j$, $U_i\times U_j$ is comeagre in some color, so there is a sequence of dense open sets $S_{i,j}^m$ for every $m\in\mathbb{N}$ such that $\bigcap_{m\in \mathbb{N}}S_{i,j}^m\subseteq U_i\times U_j$ is monochromatic.  We note that as each $G_i^m$ and $S_{i,j}^m$ are dense and open, for any $m\in \mathbb{N}$ and open sets $V_1,\cdots,V_{2r}$ there are restricted open sets $V_1',\cdots,V_{2r}'$ such that each $V_i'\subseteq G^m_i$, and $V_i'\times V_j'\subseteq S_{i,j}^m$.

\vspace{3mm}

Using these sequences of open sets, we will define Cantor sets $C_1,\cdots,C_{2r}$ with $C_i\subseteq U_i$, being careful to ensure the edges in $C_i\times C_j$ are monochromatic and that $f|[C_i]^2$ is continuous for any $i,j\in [2r]$. The construction method we will use is commonly referred to as a Cantor scheme.
\vspace{3mm}
\par We first fix some compatible metric $d$ on $X$ and set $R_i^\emptyset=U_i$.  Once we have already defined $R_i^s$ for $s\in \{0,1\}^n$, we will define $R_i^{s^\frown l}$ for $l\in \{0,1\}$ ensuring that the following hold:
\begin{enumerate}
\item $R_i^{s^\frown 0}$ and $R_i^{s^\frown 1}$ are non empty disjoint open sets.
\item $cl(R_i^{s^\frown l})\subseteq R_i^s$ and $diam(R_i^{s^\frown l})\leq 2^{-n-1}$.
\item $R_i^{s^\frown l}\subseteq G^{n+1}_i$.
\item For any $i,j\in[2r]$ and $u,v\in \{0,1\}^{n+1}$, $R_i^u\times R_j^v\subseteq S_{i,j}^{n+1}$.
\end{enumerate}
The discussion in the second paragraph guarantees that we can satisfy these requirements at every step.

We let $C_i=\bigcap_{n\in\mathbb{N}}\bigcup_{s\in 2^n} R_i^{s}$ (which is well defined by the completeness of $X$ and $(2)$) denote the Cantor set associated with the sequence $R_i$.  As $f|[C_i]^2$ must still be continuous since $C_i\subseteq G_i$ by $(2,3)$, we may apply Galvin's theorem within each $C_i$ to obtain the desired configuration.
\end{proof}

\section{Discussion}
\par Here, we we collect some open problems and future directions of research. Firstly, even though the upper bound provided by Theorem \ref{thm:finitetheorem} is asymptotically tight with respect to $\ep$ and $k$, the constant that arises that depends only on the number of colors $c(r)$, was rather large. It would be interesting to obtain an upper bound of the form $\ep^{-ck}$, where $c$ is sub-exponential in $r$ (the number of colors).
\par An extension of Bollob\'as's conjecture for hypergraphs was given in \cite{stuff} (Theorem 4.2). It would not be difficult to extend this result to arbitrary many colors, using an appropriate family of multicolored unavoidable hypergraphs as we did here for multicolored graphs. However, as far as we are aware there are no practical bounds, already for two colors and uniformity three. We state the result for uniformity three below informally. A more precise statement for arbitrary uniformities can be found in \cite{stuff}.
\begin{theorem}
For any $\ep>0$ and positive integer $k$, for sufficiently large $n$, any two-coloring of $K_n^{(3)}$ (the $3$ uniform complete graph on $n$ vertices) with $\ep n^3$ edges in both colors contains a subgraph on $3k$ vertices with three disjoint sets of vertices of size $k$ such that the color of any $3$-edge depends only on the sizes of the intersections between the edge and each of the three parts and red and blue both appear somewhere in the graph.
\end{theorem}
\par A proof for the above Theorem in full generality was given in the appendix of \cite{kwan}, as it was used to prove another result in that paper. However, no explicit bound on $n$ was cited, and from the proof it is clear that the dependence of $n$ on $r$ (uniformity) is a tower of length much larger than $r$. For $r=3$, as stated in the above Theorem, we pose the following problem that was essentially also raised in \cite{stuff}.
\begin{question}
Can we take $n\geq (1/\ep)^{2^{ck}}$ in Theorem 5.1, for $c$ some absolute constant?
\end{question}
\par For the convenience of the reader, we sketch a short proof of Theorem 5.1 that is somewhat different than the proof that was given in \cite{kwan}, which we believe is more direct. Like in \cite{kwan}, we will not get any sensible bounds because of a reference to the Product Ramsey Theorem (\cite{promel}, Theorem 9.2) that is a greedy iterated application of  hypergraph Ramsey theorem.
\begin{proof}[Sketch of Theorem 5.1]
Consider an $\ep$-balanced $3$-uniform complete hypergraph on $N$ vertices where $N$ is sufficiently large. As each color has $\Omega(n^3)$ edges, by a result of Erd\H{o}s (\cite{erdos}, Theorem 1) (that is a generalization of the K\H{o}vari-S\'os-Tur\'an Theorem \cite{KoSoTu} for hypergraphs) we can find a red monochromatic tripartite graph (vertex set partitioned into three parts such that the edges intersect each part in exactly one vertex), such that the sizes of each part is $N_1$, where $N_1$ depends only on $N$ and is sufficiently large. We can similarly find a large blue tripartite graph that is disjoint from the red tripartite graph.
\par We can now apply the Product Ramsey Theorem (\cite{promel}, Theorem 9.2) to the red tripartite graph until edges that intersect the same parts in the same amount are monochromatic. If blue appears somewhere in the resulting substructure, we are done, assuming $N_1$ was sufficiently large. Otherwise, we have found a large red clique. Applying the same argument to the blue tripartite graph, we may assume we found a large blue clique.
\par Now we may apply the Product Ramsey Theorem between the large red clique and the large blue clique, which will yield a structure of the desired type.
\end{proof}
\par We also note that in the infinite case we considered only one of many potential ways of generalizing the notion of balanced colorings. Another natural question is as follows:
\begin{question}
If $c:[\mathbb{R}]^2\rightarrow[r]$ is a Lebesgue measurable coloring such that each color class is non-null, can we find some color consistent member of  $\mathcal{F}_{\mathfrak{c}}^r$ where each blown-up clique is a Cantor set?

\end{question}

As in the BP setting there are analogues of the Ramsey and KST theorems that suggest such a result should hold.  We can even prove an analogue of Corollary \ref{cor:multipartitedrc} in this setting, which was the key step in the proof of Theorem \ref{thm:inf}.  In particular, we can adapt the proof of the Brodski-Eggleston Theorem \cite{brod, egg} to find a graph with the same multipartite structure as a member of an $\mathcal{F}_{\mathfrak{c}}^r$. However, unlike in the BP setting, we are not currently able to ensure that the coloring function is continuous on the Cantor sets we end up with after this step, so we are unable to apply a variant of Ramsey's theorem and complete the proof.
\section{Acknowledgements}
We would like to thank Benny Sudakov for bringing to our attention the results in \cite{cutler} and \cite{stuff}. We also thank Clinton Conley, Shagnik Das, Wesley Pegden, and Tibor Szabó for helpful discussions.

\end{document}